\newtheorem{theorem}{Theorem}
\newtheorem{corollary}[theorem]{Corollary}
\newtheorem{lemma}[theorem]{Lemma}
\newtheorem{remark}{Remark}
\newtheorem{assumption}{Assumption}
\newtheorem{definition}{Definition}
\newcommand{\bY}{\mathbbm{Y}}
\newcommand{\bR}{\mathbbm{R}}
\newcommand{\cP}{\mathcal{P}}
\newcommand{\cM}{\mathcal{M}}
\newcommand{\Exp}{{\rm{E}}}
\newcommand{\Var}{\textup{Var}}
\newcommand{\bP}{{\rm{P}}}
\begin{document}

\title{Optimal Decision Rules for Composite Binary Hypothesis Testing under Neyman-Pearson Framework}

\author{Yanglei Song, Berkan Dulek, and Sinan Gezici, \emph{Fellow, IEEE}
\thanks{Y. Song is with the Department of Mathematics and Statistics, Queen's University, Kingston, Ontario, Canada (e-mail: yanglei.song@queensu.ca). B. Dulek is with the Department of Electrical and Electronics Engineering,
Hacettepe University, Beytepe Campus, Ankara 06800, Turkey (e-mail:
berkan@ee.hacettepe.edu.tr). S. Gezici is with the Department of Electrical and Electronics Engineering, Bilkent University, Ankara 06800, Turkey, (e-mails: gezici@ee.bilkent.edu.tr). Y. Song acknowledges the support by NSERC Grant RGPIN-2020-04256. B. Dulek was supported by the Scientific and Technological Research Council of Turkey (TUBITAK) under the Grant Number 122E493. S. Gezici was supported by TUBITAK under the BIDEB 2219 Program. B. Dulek and S. Gezici thank TUBITAK for their supports.}}


\maketitle

\begin{abstract}
The composite binary hypothesis testing problem within the Neyman-Pearson framework is considered. The goal is to maximize the expectation of a nonlinear function of the detection probability, integrated with respect to a given probability measure, subject to a false-alarm constraint. It is shown that each power function can be realized by a generalized Bayes rule that maximizes an integrated rejection probability with respect to a finite signed measure. For a simple null hypothesis and a composite alternative, optimal single-threshold decision rules based on an appropriately weighted likelihood ratio are derived. The analysis is extended to composite null hypotheses, including both average and worst-case false-alarm constraints, resulting in modified optimal threshold rules. Special cases involving exponential family distributions and numerical examples are provided to illustrate the theoretical results.
\end{abstract}

\begin{IEEEkeywords}
Binary hypothesis testing, composite hypotheses, optimal detection, randomized decision rules.
\end{IEEEkeywords}

\IEEEpeerreviewmaketitle

\newpage 

\section{Introduction}\label{sec:Intro}

In binary hypothesis testing, the goal is to decide between two possible statistical scenarios, known as \emph{hypotheses}, based on an observation \cite{NP1933}--\nocite{Poor,romano2005testing}\cite{KayDetection}. This framework arises in various applications such as target detection, data communication, and classification \cite{richards2005fundamentals}--\nocite{trees2001detection}\cite{DetectionRadar}. Depending on whether the observation under each hypothesis corresponds to a single probability distribution or a family of distributions, hypothesis testing problems are categorized as \emph{simple} or \emph{composite}. 

In \emph{simple} binary hypothesis testing problems, the observation follows a single distribution under each hypothesis. For these problems, optimal decision rules can readily be derived using classical optimality frameworks such as Bayesian, minimax, and Neyman-Pearson (NP) hypothesis testing \cite{NP1933}--\nocite{Poor,romano2005testing}\cite{KayDetection}. In particular, \emph{likelihood ratio tests} are shown to be optimal, where the ratio of the distributions under the two hypotheses is compared to a threshold. In contrast, \emph{composite} binary hypothesis testing---where at least one hypothesis is composite---varies in complexity depending on the chosen optimality criterion. For example, in the Bayesian framework, these problems are as tractable as simple hypothesis testing by using averaged likelihood functions in a likelihood ratio test \cite{Poor}. However, the problem can become complicated in the NP framework, which is the main focus of this paper.

In the NP framework for distinguishing between two \emph{simple} hypotheses, the goal is to maximize the detection probability while maintaining a constraint on the false-alarm probability \cite{Poor, trees2001detection}. When the null hypothesis is composite, the common practice is to enforce the false-alarm constraint for all possible distributions under the null hypothesis \cite{Poor}, \cite{cvitanic2001generalized,rudloff2008testing}. In contrast, when the alternative hypothesis is composite, various approaches are employed in the literature. One approach is to seek a uniformly most powerful (UMP) decision rule that maximizes the detection probability for all possible distributions under the alternative hypothesis, while satisfying the false-alarm constraint \cite{Poor, trees2001detection}. However, such a decision rule exists only under specific conditions \cite{Poor}. An alternative approach is to maximize the average detection probability under the false-alarm constraint \cite{bayarri2004interplay}--\nocite{lehmann2009history,begum2007new}\cite{berger1997unified}. In this case, the problem reduces to an NP problem for a simple alternative hypothesis by representing the alternative distribution as the integrated distribution over the prior on the parameter under the alternative hypothesis \cite{bayram2011restricted}.  
This approach, however, requires a prior distribution to compute the average detection probability. If such a prior distribution is unavailable (as is often the case in NP formulations), a maximin approach can be used. This method aims to maximize the minimum detection probability subject to the false-alarm constraint \cite{cvitanic2001generalized,rudloff2008testing}. The solution can be characterized by an NP decision rule corresponding to a least-favorable distribution under the alternative hypothesis in a dual problem formulation \cite{rudloff2008testing}. Notably, considering the least-favorable distribution is equivalent to assuming a worst-case scenario, which corresponds to a conservative approach. Modifications to this approach are studied in \cite{huber1973minimax,augustin2002neyman} utilizing the interval probability concept. As a generalization, the restricted NP approach is proposed in \cite{bayram2011restricted}, addressing uncertainty in the prior distribution of the alternative hypothesis. A restricted NP decision rule aims to maximize the average detection probability calculated based on the uncertain prior distribution while ensuring that the minimum detection probability does not fall below a predefined threshold and that the false-alarm probability remains below a specified significance level. 
It is shown that the restricted NP decision rule can be specified as a classical NP decision rule corresponding to a certain least-favorable distribution \cite{bayram2011restricted}. 

The generalized likelihood ratio test (GLRT) is also a common and practical approach for composite binary hypothesis testing \cite{Poor}, \cite{trees2001detection}. In the GRLT, the maximum likelihood estimates of the unknown parameter are obtained under both hypotheses using the observation and substituted into the corresponding likelihood expressions in place of the unknown parameter value. In this way, a likelihood ratio test is formed based on the parameter values that best fit the observation under both hypotheses. While the GLRT is easy to design and implement, it lacks optimality properties in general for finitely many observations, though it has some asymptotic optimality properties \cite{zeitouni2002generalized}. As an alternative asymptotic approach, the studies in \cite{hoeffding1965asymptotically}--\nocite{natarajan1985large,zeitouni1991universal}\cite{levitan2002competitive} adopt a generalized version of the NP criterion, which aims to maximize the misdetection exponent uniformly across all distributions under the alternative hypothesis subject to the false-alarm exponent constraint. Moreover, in \cite{ITW2022}, a simple null and composite alternative hypothesis testing problem is considered under the NP framework, and a decision rule is proposed to achieve the optimal error exponent tradeoff for each alternative distribution.

Our work addresses \textit{non-asymptotic} composite binary hypothesis testing problems within an NP detection theoretic framework. Our primary motivation stems from recent advances in behavioral utility-based detection problems \cite{Gezici_PHT}--\nocite{geng20a,geng20b}\cite{dulek24}. Specifically, for decision-making tasks involving humans in the loop, their cognitive biases and subjective perception of probabilities, gains, and losses need to be taken into account. These effects can be incorporated into the system performance metric using prospect-theoretic probability weighting and cost valuation functions \cite{ProsHT}, which introduce a nonlinear relationship between actual and perceived performance metrics. Despite extensive prior research, deriving optimal decision rules under such \textit{nonlinearities} remains a challenge. In this regard, the main contributions and novel aspects of this manuscript can be listed as follows.

\begin{enumerate} 
\item In Section \ref{subsec:structural_results}, we derive structural results on the set of all power functions, where the power function of a randomized decision rule is defined as its rejection probabilities over the parameter space. Specifically, we show that each power function can be realized by a ``generalized Bayes rule'' that maximizes the integral of a power function with respect to (w.r.t.) some finite \textit{signed} measure (see Theorem \ref{thrm:general_case}). Thus, for \textit{any} hypothesis testing problem whose formulation solely depends on power functions, it suffices to consider the family of ``generalized Bayes rules''.

\item In Section \ref{sec:simple_null_comp_alter}, we consider the problem of testing a simple null hypothesis against a composite alternative. The objective is to maximize the expected value of a continuously differentiable but otherwise arbitrary function $g(\cdot)$ of the detection probability, subject to a constraint on the false-alarm probability. The expectation is taken w.r.t.~a given probability measure over the unknown parameter under the alternative hypothesis. We show that optimal detection is achieved using a single-threshold test on the following statistic: the ratio of a probability density function (p.d.f.), averaged over the alternative hypothesis, to the p.d.f.~under the null hypothesis, where the averaging measure depends on the power function of the optimal detection rule (see Theorem \ref{theorem:simple_null_structrual}).
If $g$ is strictly increasing and the likelihood ratio can be expressed as a convex function of a scalar-valued sufficient statistic for the family of distributions indexed by the unknown parameter, then the optimal test accepts the null hypothesis when the sufficient statistic falls within an interval  (possibly semi-infinite) of the real line and rejects otherwise. 
    
\item In Sections \ref{sec:composite_null_integrated} and \ref{sec:composite_null_supremum}, we extend our analysis to the case where the null hypothesis is also composite, while maintaining the same objective function as in Section \ref{sec:simple_null_comp_alter}.  In Section \ref{sec:composite_null_integrated}, we impose a constraint on the average false-alarm probability w.r.t.~a given probability measure $\Lambda_0$ for the unknown parameter under the null hypothesis. In contrast, in Section \ref{sec:composite_null_supremum}, the constraint is on the worst-case false-alarm probability under the null. 

For average false-alarm control, we show that if we replace the single null p.d.f.~from the previous part with the $\Lambda_0$-integrated p.d.f.~under the composite null hypothesis, then the modified single-threshold rule is optimal (see Theorem \ref{thrm:composite_null_avg}). In addition, for the supremum false-alarm control, we establish that replacing the single null p.d.f.~with the integrated p.d.f., taken w.r.t.~some least-favorable distribution under the null, yields the optimal decision rule (see Theorem \ref{theorem:composiste_supremum}).

    \item Throughout the paper, several theorems and corollaries are derived for the special case of single-parameter exponential family distributions (see Corollary \ref{cor:exponential_family}, Theorem \ref{theorem:composiste_null_exponential} and Theorem \ref{theorem:composiste_exponential_supremum}). Numerical examples from behavioral utility-based detection theory are also provided to corroborate the theoretical results.   
\end{enumerate}

\section{Problem Formulation}\label{sec:Pre}

Consider a compact parameter space $\Theta \subset \bR^{d_1}$ for some integer $d_1 \geq 1$, and partition it into two disjoint, non-empty subsets, $\Theta_0$ and $\Theta_1$, such that $\Theta = \Theta_0 \cup \Theta_1$ and  $\Theta_0 \cap \Theta_1 = \emptyset$. Denote by $Y$ an observation that takes values in the observation space $\bY \subset \bR^{d_2}$ for some integer $d_2 \geq 1$. Let $\mu$ be a $\sigma$-finite measure on $\mathbbm{Y}$, and for each $\theta \in \Theta$, let $f_{\theta}$ be a p.d.f. w.r.t. $\mu$. When $\theta \in \Theta$ is true, the observation $Y$ has a density $f_{\theta}$ w.r.t.~$\mu$.


The goal is to decide, based on the observation $Y$, whether $\theta$ belongs to $\Theta_0$ or $\Theta_1$, which correspond to hypothesis ${\mathcal{H}}_0$ and hypothesis ${\mathcal{H}}_1$, respectively. Specifically, a randomized decision rule $\delta$ is a measurable function from $\mathbbm{Y}$ to $[0,1]$ such that $\delta(Y)$ represents the probability of rejecting ${\mathcal{H}}_0$ (or equivalently, accepting ${\mathcal{H}}_1$). Let $\Delta$ denote the collection of all such randomized decision rules. 
For each rule $\delta \in \Delta$ and  $\theta \in \Theta$, we define
\begin{equation*}
p(\theta;\delta) := {\rm{E}}_{\theta}[\delta(Y)] = \int_\mathbbm{Y} \delta(y) f_{\theta}(y) \mu(dy),
\end{equation*}
which is the probability of rejecting ${\mathcal{H}}_0$ when $\theta$ is the true parameter value. We refer to $p(\cdot;\delta) $ as the \emph{power function} of $\delta$, and let 
\begin{equation}\nonumber
\mathcal{P} = \{p(\cdot\,; \delta): \delta \in \Delta\}
\end{equation}
denote the set of all power functions that can be realized. 

In this work, we investigate  various optimization problems with the aim of finding decision rules that optimize a certain objective function subject to constraints, where both the objective function and constraints depend solely on the associated power functions.

\section{Structural Results}\label{subsec:structural_results}

In this section, we establish structural results for the set $\mathcal{P}$ of all power functions, and show that for any hypothesis testing problem, it is sufficient to focus on the family of ``generalized Bayes rules''.


For each $\theta \in \Theta$, denote by $\mu_{\theta}$ the distribution of the observation $Y$ when $\theta$ is the true parameter value, that is, 
$d \mu_{\theta}/d\mu(y) = f_{\theta}(y)$ for $y \in \bY$, where $d \mu_{\theta}/d\mu$ denotes the Radon-Nikodym derivative of $\mu_{\theta}$ w.r.t.~$\mu$.

\begin{assumption}
        \label{assumption:continuity}
The function $\theta \mapsto \mu_{\theta}$ is continuous in total variation distance, that is,  
\begin{equation*}
   \lim_{n\to \infty} \int_{\bY} |f_{\theta_n}(y) - f_{\theta}(y)| \mu(dy) = 0,
 \end{equation*}
 for  all $\theta_n, n \geq 1$, $\theta \in \Theta$  and  $\lim_{n \to \infty} \theta_n = \theta$. Further, for any $\theta, \theta' \in \Theta$, $\mu_{\theta}$ and $\mu_{\theta'}$ are mutually absolutely continuous.
 \end{assumption}

The last condition of Assumption \ref{assumption:continuity} requires that $\{\mu_{\theta}: \theta \in \Theta\}$ has the same support. Denote by $C(\Theta)$ the space of continuous functions on the compact set $\Theta$, and we equip it, as well as its subsets, with the supremum norm $\|\cdot\|_{\infty}$. For a vector $v \in \bR^{d_1}$, denote its Euclidean norm by $\|v\|$. We write  ``$\mu$-a.e.'' to mean  ``$\mu$-almost everywhere'' for simplicity. We summarize some properties about $\cP$ in the following lemma and provide its proof in Appendix \ref{app:proof_cP}.

\begin{lemma}\label{lemma:structural}
Suppose Assumption \ref{assumption:continuity} holds. 
\begin{enumerate}
    \item $\mathcal{P} \subset C(\Theta)$.
    \item $\mathcal{P}$ is convex and compact.
    \item Assume that $\Theta \subset \bR^{d_1}$ has a non-empty interior $\Theta^{\circ}$.  Let $U \subset \Theta^{\circ}$ be a non-empty open set.  Assume that for $\mu$-a.e.~$y$, the function $\theta \in U \mapsto f_{\theta}(y) \in \bR$ is differentiable with gradient $\dot f_{\theta}(y)$. In addition,  there exists  a function $F:\bY \to \bR$ such that  $\sup_{\theta \in U}\|\dot f_{{\theta}}(y)\| \leq F(y)$ for $\mu$-a.e.~$y$ and $\int_{\bY} F(y)\mu(dy) < \infty$. Then the interior of $\cP$ is empty.  
    \item Let $p \in \cP$. If $0 < p(\theta') < 1$ for some $\theta' \in \Theta$, then $0 < \inf_{\theta \in \Theta} p(\theta) \leq \sup_{\theta \in \Theta} p(\theta) < 1$. 
\end{enumerate}
\end{lemma}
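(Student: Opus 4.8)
The plan is to prove the four claims in the stated order, since the continuity of part~1 and the compactness of part~2 are reused later. For part~1, given $\theta_n \to \theta$ I would bound $|p(\theta_n;\delta) - p(\theta;\delta)| \le \int_{\bY} |f_{\theta_n}(y) - f_{\theta}(y)|\,\mu(dy)$ using $0 \le \delta \le 1$, and invoke the total-variation continuity in Assumption~\ref{assumption:continuity} to send the right-hand side to $0$; hence $p(\cdot;\delta)\in C(\Theta)$. Convexity in part~2 is immediate, as for $\delta_1,\delta_2\in\Delta$ and $\lambda\in[0,1]$ the rule $\lambda\delta_1+(1-\lambda)\delta_2$ again lies in $\Delta$ and $\delta\mapsto p(\cdot;\delta)$ is linear. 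The substantive claim is compactness. I would first show $\cP$ is uniformly equicontinuous: the total-variation ``distance'' $d(\theta,\theta'):=\int_{\bY}|f_{\theta}-f_{\theta'}|\,\mu(dy)$ is jointly continuous on the compact set $\Theta\times\Theta$ (by the triangle inequality together with Assumption~\ref{assumption:continuity}) and vanishes on the diagonal, so it admits a uniform modulus of continuity; since $|p(\theta)-p(\theta')|\le d(\theta,\theta')$ for every $p\in\cP$, the family $\cP$ is equicontinuous, and it is equibounded because $0\le p\le 1$.

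To finish compactness I would identify uniform limits as genuine power functions. Regarding $\Delta$ as a subset of the closed unit ball of $L^{\infty}(\mu)$, it is convex and weak-$*$ closed, hence weak-$*$ compact by Banach--Alaoglu (and metrizable, since $L^1(\mu)$ is separable on $\bY\subset\bR^{d_2}$). Given any sequence $p_n=p(\cdot;\delta_n)$ I would extract $\delta_{n_k}\rightharpoonup\delta\in\Delta$ in the weak-$*$ sense; since each $f_{\theta}\in L^1(\mu)$ this yields pointwise convergence $p_{n_k}(\theta)\to p(\theta;\delta)$, and equicontinuity upgrades pointwise to uniform convergence, so $p_{n_k}\to p(\cdot;\delta)$ in $\|\cdot\|_{\infty}$ with limit in $\cP$. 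I expect this identification of the limit to be the main obstacle of the lemma, as it is where the weak-$*$ compactness of $\Delta$ is essential.

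For part~3 I would differentiate under the integral sign: the domination $\sup_{\theta\in U}\|\dot f_{\theta}\|\le F$ with $\int F\,d\mu<\infty$ justifies (via the mean value theorem and dominated convergence) that each $p(\cdot;\delta)$ is differentiable on $U$ with $\nabla_{\theta}\,p(\theta;\delta)=\int_{\bY}\delta(y)\,\dot f_{\theta}(y)\,\mu(dy)$, so that $\|\nabla_{\theta}\,p(\theta;\delta)\|\le\int_{\bY}F\,d\mu=:M$ uniformly over $\delta\in\Delta$ and $\theta\in U$. Assuming toward a contradiction that $\cP$ contains a ball $\{q:\|q-p_0\|_{\infty}<r\}$, I would fix $\theta_*\in U$ and a unit vector $v$ and perturb by $h(\theta)=(r/2)\sin(N\langle v,\theta-\theta_*\rangle)$, which satisfies $\|h\|_{\infty}\le r/2<r$ so that $p_0+h\in\cP$; but the directional derivative of $p_0+h$ along $v$ at $\theta_*$ equals $\partial_v p_0(\theta_*)+(r/2)N$, whose magnitude exceeds $M$ for large $N$, contradicting the uniform gradient bound. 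This step is routine once differentiation under the integral is justified and the perturbation is taken supported inside $U$.

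Finally, for part~4 I would set $A=\{\delta>0\}$ and $B=\{\delta<1\}$. From $p(\theta')>0$ I get $\mu_{\theta'}(A)>0$, and from $p(\theta')<1$ I get $\mu_{\theta'}(B)>0$. The same-support (mutual absolute continuity) condition in Assumption~\ref{assumption:continuity} then transfers positivity to every parameter: $\mu_{\theta}(A)>0$ and $\mu_{\theta}(B)>0$ for all $\theta$, whence $p(\theta)=\int_{A}\delta f_{\theta}\,d\mu>0$ and $1-p(\theta)=\int_{B}(1-\delta)f_{\theta}\,d\mu>0$ for every $\theta\in\Theta$. Since $\Theta$ is compact and $p$ is continuous by part~1, the infimum and supremum are attained, which upgrades these pointwise strict inequalities to $0<\inf_{\theta}p(\theta)\le\sup_{\theta}p(\theta)<1$. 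Here the crux is the transfer of positivity through mutual absolute continuity, after which compactness and continuity do the rest.
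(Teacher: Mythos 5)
Your proof is correct, and parts 1, 2, and 4 follow the same skeleton as the paper's proof; the differences are localized to two steps. In part 2, the paper handles the crucial identification-of-limits step by citing the weak compactness theorem for test functions (Theorem A.5.1 of Lehmann--Romano): given $\|p_n - p\|_\infty \to 0$, it extracts a subsequence $\delta_{n_j}$ and $\delta \in \Delta$ with $\int \delta_{n_j}\eta\,d\mu \to \int \delta\eta\,d\mu$ for every $\mu$-integrable $\eta$, concluding $p = p(\cdot;\delta) \in \cP$, and obtains precompactness separately via Arzel\`a--Ascoli. You instead re-derive that compactness fact from first principles: $\Delta$ is a convex, weak-$*$ closed subset of the unit ball of $L^\infty(\mu)$, hence weak-$*$ compact by Banach--Alaoglu and sequentially so by separability of $L^1(\mu)$; combined with your (slightly stronger, uniform) equicontinuity bound this yields sequential compactness of $\cP$ directly. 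This is the same underlying fact, so the trade-off is only self-containedness versus brevity. In part 3 your argument is genuinely different: the paper notes that every $p \in \cP$ is differentiable on $U$ (by dominated convergence) and that adding an arbitrarily small continuous function that is non-differentiable on $U$ exits $\cP$, whereas you derive the quantitative uniform gradient bound $\|\nabla_\theta p(\theta;\delta)\| \le \int_{\bY} F\,d\mu$ and contradict it with the oscillatory perturbation $(r/2)\sin(N\langle v,\theta-\theta_*\rangle)$ for large $N$. Both are valid; the paper's is shorter, while yours only uses smooth perturbations and makes explicit \emph{why} differentiability is incompatible with containing a ball (one minor remark: your perturbation need not be ``supported inside $U$'' --- it only needs small sup norm on $\Theta$ and a large directional derivative at one point of $U$, which your sine function already has).
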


\begin{remark}
In part 3) of Lemma \ref{lemma:structural}, we require the parameter space $\Theta$ to have a non-empty interior (i.e., containing an open set), and thus exclude the case of finite $\Theta$ for that part. The intuition is that, under the assumptions in part 3), any $p \in \cP$ is a differentiable function on $U$. However, $C(\Theta)$ is sufficiently rich that in any neighborhood of $p \in \cP$, there exist continuous but non-differentiable functions on $U$.
\end{remark}

\subsection{Generalized Bayes Rules}

Denote by $\mathcal{M}(\Theta)$, $\mathcal{M}_{+}(\Theta)$ and $\mathcal{M}_1(\Theta)$ the space of finite signed measures, finite measures, and probability measures on $\Theta$, respectively (see \cite[Section 3.1]{folland1999real} for definitions of signed measures). Further, recall the Hahn-Jordan decomposition of signed measures in \cite[Theorem 3.4]{folland1999real}.

For two probability measures $\pi^{+}, \pi^{-} \in \mathcal{M}_1(\Theta)$, a non-negative real number $c \geq 0$, and a measurable function $\gamma:\bY \to [0,1]$, we define the following rule: for $y \in \bY$, 
\begin{align}
    \label{def:optimal_rule}
    \delta^*(y; \pi^{+}, \pi^{-}, c, \gamma)=
\begin{cases}
1, & \text{if } \int_{\Theta} f_{\theta}(y) \pi^+(d\theta) 
> c \int_{\Theta} f_{\theta}(y) \pi^-(d\theta)\\
\gamma(y), & \text{if } \int_{\Theta} f_{\theta}(y) \pi^+(d\theta) 
= c \int_{\Theta} f_{\theta}(y) \pi^-(d\theta)\\
0, & \text{if } \int_{\Theta} f_{\theta}(y) \pi^+(d\theta) 
< c \int_{\Theta} f_{\theta}(y) \pi^-(d\theta)
\end{cases}.
\end{align}

The next lemma shows that when the objective is to maximize the averaged power function with respective to some finite \textit{signed} measure $\nu \in \cM(\Theta)$, the optimal rule must be of the form in \eqref{def:optimal_rule}. For this reason, we refer to this family of decision rules as ``generalized Bayes rules''. (Conventional Bayes rules employ positive measures and do not focus on the equality case as it does affect the Bayes risk.)

\begin{lemma}\label{lemma:Bayes_rule}
 Let $\nu \in \cM(\Theta)$ be a finite \textit{signed} measure. Denote by $\nu^{+} \in \cM_{+}(\Theta)$ and $\nu^{-} \in  \cM_{+}(\Theta)$  the positive and negative parts in the Hanh decomposition of $\nu$, respectively. Assume $\nu^{+}$ and $\nu^{-}$ are non-zero. Consider the optimization problem:
\begin{align}
    \label{def:Bayes_V}
\max_{\delta \in \Delta}\; \int_{\Theta} p(\theta;\delta) \nu(d\theta).
\end{align}
Let $\delta \in \Delta$. Then $\delta$ is an optimal rule for \eqref{def:Bayes_V} if and only if $\delta$ is equal to, up to $\mu$-a.e.-$y$,  
$\delta^*(\cdot;\pi^{+}, \pi^{-}, c, \gamma)$, where
 \begin{align*}
& \pi^+ := \frac{\nu^+}{\int_{\Theta} \nu^+(d\theta)},\quad  \pi^- := \frac{\nu^-}{\int_{\Theta} \nu^-(d\theta)}, \\
& c := {\int_{\Theta} \nu^-(d\theta)}{\Big/}{\int_{\Theta} \nu^+(d\theta)},
 \end{align*}
and $\gamma:\bY \to [0,1]$ is some measurable function.
\end{lemma}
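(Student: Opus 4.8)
The plan is to collapse the averaged objective in \eqref{def:Bayes_V} into a single integral over $\bY$ by interchanging the order of integration, after which the optimizer can be read off pointwise. First I would write $p(\theta;\delta)=\int_{\bY}\delta(y)f_{\theta}(y)\mu(dy)$ and substitute it into \eqref{def:Bayes_V}, turning the objective into a double integral in $\theta$ and $y$. To justify the interchange via Fubini's theorem, I would note that the map $(\theta,y)\mapsto f_{\theta}(y)$ is jointly measurable: by Assumption \ref{assumption:continuity} it is continuous in $\theta$ and measurable in $y$, hence a Carath\'eodory function. Integrability then follows from the Tonelli bound
\begin{equation*}
\int_{\Theta}\int_{\bY}|\delta(y)|f_{\theta}(y)\,\mu(dy)\,|\nu|(d\theta)\le \int_{\Theta}|\nu|(d\theta)=|\nu|(\Theta)<\infty,
\end{equation*}
where $|\nu|=\nu^{+}+\nu^{-}$ denotes the total variation, since $0\le\delta\le1$ and each $f_{\theta}$ integrates to one. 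Writing $\nu=\nu^{+}-\nu^{-}$ and setting $g(y):=\int_{\Theta}f_{\theta}(y)\nu^{+}(d\theta)-\int_{\Theta}f_{\theta}(y)\nu^{-}(d\theta)$, the objective reduces to $\int_{\bY}\delta(y)g(y)\mu(dy)$.

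Next I would solve the resulting pointwise maximization. For any measurable $\delta:\bY\to[0,1]$ one has $\delta(y)g(y)\le \max(g(y),0)$, with equality precisely when $\delta(y)=1$ on $\{g>0\}$ and $\delta(y)=0$ on $\{g<0\}$, so the supremum of $\int_{\bY}\delta g\,d\mu$ equals $\int_{\bY}\max(g,0)\,d\mu$. It then remains to match this description to $\delta^{*}$ of \eqref{def:optimal_rule}. Setting $N^{\pm}:=\nu^{\pm}(\Theta)>0$ (non-zero by hypothesis), $\pi^{\pm}:=\nu^{\pm}/N^{\pm}$ and $c:=N^{-}/N^{+}$, dividing the defining inequalities for $g$ by $N^{+}$ shows that the cases $g(y)>0$, $g(y)=0$, $g(y)<0$ correspond respectively to $\int_{\Theta}f_{\theta}(y)\pi^{+}(d\theta)$ being $>$, $=$, $<$ than $c\int_{\Theta}f_{\theta}(y)\pi^{-}(d\theta)$. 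Hence $\delta^{*}(\cdot;\pi^{+},\pi^{-},c,\gamma)$ attains the bound for \emph{every} measurable $\gamma:\bY\to[0,1]$, which establishes the sufficiency direction of the equivalence.

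For necessity, suppose $\delta$ is optimal, so that $\int_{\bY}\delta g\,d\mu=\int_{\bY}\max(g,0)\,d\mu$. Since $\{g=0\}$ contributes nothing, I would decompose the gap as
\begin{equation*}
\int_{\bY}\max(g,0)\,d\mu-\int_{\bY}\delta g\,d\mu=\int_{\{g>0\}}(1-\delta)\,g\,d\mu+\int_{\{g<0\}}(-\delta g)\,d\mu,
\end{equation*}
where both integrands are non-negative. Optimality forces each term to vanish, yielding $\delta=1$ $\mu$-a.e.\ on $\{g>0\}$ and $\delta=0$ $\mu$-a.e.\ on $\{g<0\}$; taking $\gamma:=\delta$ on $\{g=0\}$ then gives $\delta=\delta^{*}(\cdot;\pi^{+},\pi^{-},c,\gamma)$ up to $\mu$-a.e.-$y$. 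The main obstacle is the very first step: securing the joint measurability of $(\theta,y)\mapsto f_{\theta}(y)$ and the integrability needed to apply Fubini's theorem. Once the objective has been legitimately rewritten as $\int_{\bY}\delta(y)g(y)\mu(dy)$, the remainder is an elementary pointwise optimization.
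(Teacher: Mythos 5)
Your proof is correct and follows essentially the same route as the paper's: Fubini's theorem to collapse the objective into $\int_{\bY}\delta(y)\bigl(\int_{\Theta}f_{\theta}(y)\nu^{+}(d\theta)-\int_{\Theta}f_{\theta}(y)\nu^{-}(d\theta)\bigr)\mu(dy)$, followed by pointwise optimization, which the paper declares ``immediate'' and you spell out in full, including the $\mu$-a.e.\ uniqueness direction. One minor caveat: Assumption \ref{assumption:continuity} gives continuity of $\theta\mapsto\mu_{\theta}$ in \emph{total variation}, not pointwise continuity of $\theta\mapsto f_{\theta}(y)$ for fixed $y$, so your Carath\'eodory justification of joint measurability is misstated; however, joint measurability of $(\theta,y)\mapsto f_{\theta}(y)$ is implicitly assumed in the paper's setup (the paper applies Fubini without comment), so this does not affect the substance of the argument.
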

\begin{proof}
    See Appendix \ref{app:proof_cP}.
\end{proof}

\begin{remark}\label{remark:zero}
If $\nu^{-}$ (resp.~$\nu^{+}$) is the zero measure, then the optimal rule above is given by $\tilde{\delta}(y) = 0$ (resp.~$\tilde{\delta}(y) = 1$) for $y \in \bY$. In either case, the optimal rule is in the form of \eqref{def:optimal_rule}.
\end{remark}

\begin{remark}
Rules in the form of  \eqref{def:optimal_rule} may also be viewed as generalizations of NP rules for simple versus simple hypothesis testing problems. In particular, $\pi^{+}$ and $\pi^{-}$ are two probability measures on $\Theta$ with \emph{disjoint support}, and we threshold the ratio of the two integrated likelihood functions.
\end{remark}

\subsection{General Structural Results}\label{subsec:general_cases}

In this subsection, we focus on the case that \emph{$\cP$ has an empty interior}.  As noted in Lemma \ref{lemma:structural}, if the parameter space $\Theta$ has a non-empty interior (e.g., if $\Theta \subset \bR$ contains an interval), under mild technical conditions, $\cP \subset C(\Theta)$ indeed has an empty interior.

The following theorem shows that  any power function can be realized by some ``generalized Bayes rule'' in \eqref{def:optimal_rule}.

\begin{theorem}\label{thrm:general_case}
Let $p \in \cP$ be an arbitrary power function. Suppose that Assumption \ref{assumption:continuity} holds, and that $\cP$ has an empty interior. 
There exist some $\pi^{+}, \pi^{-} \in \mathcal{M}_1(\Theta)$,  $c \geq 0$, and $\gamma:\bY \to [0,1]$ such that  ${\pi}^+$ and ${\pi}^{-}$ are mutually singular,  and $p$ is the power function associated with the decision rule $\delta^*(\cdot; \pi^{+}, \pi^{-}, c, \gamma)$.  
\end{theorem}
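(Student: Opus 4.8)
\emph{Proof proposal.} The plan is to recast the statement as a \emph{supporting hyperplane} problem and then invoke Lemma~\ref{lemma:Bayes_rule}. I would first argue that it suffices to produce a \emph{nonzero} finite signed measure $\nu \in \cM(\Theta)$ for which $p$ maximizes the linear functional $q \mapsto \int_{\Theta} q\, d\nu$ over $q \in \cP$; that is, $\nu$ supports $\cP$ at $p$. Indeed, given such a $\nu$, any rule $\delta$ with power function $p$ is a maximizer of $\delta \mapsto \int_{\Theta} p(\theta;\delta)\,\nu(d\theta)$ over $\Delta$, so Lemma~\ref{lemma:Bayes_rule} (applied when both $\nu^{+}$ and $\nu^{-}$ are nonzero) forces $\delta$ to agree $\mu$-a.e.\ with $\delta^{*}(\cdot;\pi^{+},\pi^{-},c,\gamma)$, where $\pi^{\pm}$ and $c$ are read off from the Hahn--Jordan decomposition of $\nu$. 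Since $\nu^{+}$ and $\nu^{-}$ live on disjoint Hahn sets, the normalized measures $\pi^{+}$ and $\pi^{-}$ are automatically mutually singular, as required; the degenerate cases $\nu^{-}=0$ or $\nu^{+}=0$ are covered by Remark~\ref{remark:zero}. This reduction uses the Riesz representation theorem to identify the dual $C(\Theta)^{*}$ with $\cM(\Theta)$, so that continuous linear functionals are exactly integration against finite signed measures.

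To produce the supporting measure, I would exploit that $\cP$ is convex and compact (Lemma~\ref{lemma:structural}) and has empty interior by hypothesis, so every $p \in \cP$ is a boundary point; hence there is a sequence $p_{n} \in C(\Theta) \setminus \cP$ with $\|p_{n}-p\|_{\infty} \to 0$. For each $n$, Hahn--Banach separation of $p_{n}$ from the closed convex set $\cP$, together with Riesz representation, yields $\nu_{n} \in \cM(\Theta)$ with unit total-variation norm such that $\sup_{q \in \cP}\int_{\Theta} q\, d\nu_{n} \le \int_{\Theta} p_{n}\, d\nu_{n}$. Because $p \in \cP$ and $\|p_{n}-p\|_{\infty}\to 0$, the quantity $\sup_{q\in\cP}\int_{\Theta} q\, d\nu_{n} - \int_{\Theta} p\, d\nu_{n}$ tends to $0$. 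Since $\Theta$ is a compact metric space, $C(\Theta)$ is separable, so the unit ball of $\cM(\Theta)$ is weak-$*$ sequentially compact (Banach--Alaoglu); passing to a subsequence, $\nu_{n} \rightharpoonup \nu$ weak-$*$. Testing against each fixed $q \in \cP \subset C(\Theta)$ and taking limits in the separation inequalities gives $\int_{\Theta} q\, d\nu \le \int_{\Theta} p\, d\nu$ for all $q \in \cP$, i.e.\ $\nu$ supports $\cP$ at $p$.

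The hard part, and the step I expect to be the main obstacle, is showing that the limiting measure $\nu$ is \emph{not} the zero measure: weak-$*$ limits of unit-norm measures can vanish, and a compact convex set with empty interior need not have all of its points as support points (the center of a Hilbert cube is the classical cautionary example). This is precisely where the empty-interior hypothesis and Assumption~\ref{assumption:continuity} must do genuine work, rather than merely guaranteeing that $p$ is a boundary point. I would split into two regimes. If $p$ attains the value $0$ or $1$ at some $\theta_{0}$, then by part~4) of Lemma~\ref{lemma:structural} the function $p$ can take values only in $\{0,1\}$, and a point mass $\nu = \pm\,\delta_{\theta_{0}}$ already supports $\cP$ at $p$, the supported rule being a constant rule in the sense of Remark~\ref{remark:zero}. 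In the remaining regime $0 < \inf_{\theta} p \le \sup_{\theta} p < 1$, the natural route is to choose the approximating points along a \emph{fixed} exit direction, $p_{n} = p + t_{n} h$ with $t_{n}\downarrow 0$ and $p+t_n h \notin \cP$, so that $t_{n}\int_{\Theta} h\, d\nu_{n} = \int_{\Theta}(p_{n}-p)\,d\nu_{n} > 0$, and then to use the finer structure of $\cP$ forced by the empty-interior hypothesis together with the total-variation continuity and common support of Assumption~\ref{assumption:continuity} to keep $\int_{\Theta} h\, d\nu_{n}$ bounded away from $0$, thereby preventing $\nu_{n} \rightharpoonup 0$. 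Establishing this nonvanishing rigorously is the technical heart of the argument; once $\nu \ne 0$ is secured, the reduction of the first paragraph completes the proof.
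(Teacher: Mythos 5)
Your first paragraph is exactly the paper's reduction: the paper likewise reduces Theorem \ref{thrm:general_case} to showing that every $p \in \cP$ is a support point of $\cP$ (i.e., maximizes $q \mapsto \int_{\Theta} q \, d\nu$ over $\cP$ for some \emph{nonzero} $\nu \in \cM(\Theta)$), and then concludes via Lemma \ref{lemma:Bayes_rule}, with Remark \ref{remark:zero} covering the degenerate cases $\nu^{+}=0$ or $\nu^{-}=0$ and the mutual singularity of $\pi^{+},\pi^{-}$ coming from the Hahn--Jordan decomposition. Up to that point the proposal and the paper agree.

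The genuine gap is the one you yourself flag: you never produce the nonzero supporting measure, and the ideas you sketch for doing so (a fixed exit direction $p_{n}=p+t_{n}h$, an unspecified appeal to ``finer structure of $\cP$'' and Assumption \ref{assumption:continuity}) do not amount to an argument. Your Hilbert-cube remark shows why this cannot be patched by care in choosing the $p_{n}$ alone: the properties you actually use (compact, convex, empty interior, separation, weak-$*$ sequential compactness of the dual ball) all hold in $\ell_{2}$, where the conclusion is false, so any valid completion must inject an additional tool. The tool the paper uses is the \emph{Bishop--Phelps theorem}, packaged as Lemma \ref{lemma:C_support}: since $\cP$ is nonempty, closed, convex (Lemma \ref{lemma:structural}) and has empty interior, its support points are \emph{dense} in $\cP$. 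The paper therefore approximates $p$ by genuine support points $f_{n} \in \cP$ with $\|f_{n}-p\|_{\infty}\to 0$, each carrying a supporting measure $\nu_{n}$ normalized to $|\nu_{n}|=1$, applies Prohorov's theorem to the positive and negative parts of $\nu_{n}$ (compactness of $\Theta$ prevents escape of mass), extracts a weakly convergent subsequence $\nu_{n_{k}}\to\nu_{\infty}$, argues $\nu_{\infty}\neq 0$ (invoking the Portmanteau theorem to get $|\nu_{\infty}|=1$), and passes to the limit in the inequalities $\int f_{n_{k}}\,d\nu_{n_{k}} \geq \int h \, d\nu_{n_{k}}$, $h \in \cP$, to conclude that $\nu_{\infty}$ supports $\cP$ at $p$. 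Your diagnosis of where the technical heart lies is accurate --- even in the paper's limiting step, unit-total-variation signed measures could in principle lose mass by cancellation between their positive and negative parts, which is exactly the phenomenon you identified --- but starting from supporting functionals at nearby points of $\cP$ (furnished by Bishop--Phelps) rather than from separating hyperplanes at exterior points is what the paper's proof rests on, and without that (or an equivalent density-of-support-points result) your proposal does not prove the theorem.
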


\begin{proof}
    See Appendix \ref{app:proof_general_case}.
\end{proof}

The above theorem establishes that for any hypothesis testing problem, if its formulation depends solely on the power function, it is sufficient to consider the family of generalized Bayes rules in \eqref{def:optimal_rule}. However, at this generality, we cannot explicitly characterize the support of $\pi^+$ or $\pi^{-}$ for each power function $p \in \cP$. In subsequent sections, we analyze specific optimization problems and derive additional results concerning the structure of optimal decision rules.

\begin{remark}
The key strategy for proving Theorem \ref{thrm:general_case} is to establish that every $p \in \cP$ is a support point of $\cP$, that is, it solves the optimization problem in \eqref{def:Bayes_V} corresponding to some non-zero signed measure $\nu \in M(\Theta)$. This enables us to apply Lemma \ref{lemma:Bayes_rule}.

Note that for general Banach spaces, such as the space of absolutely summable sequences, not every point in a closed, convex subset with an empty interior is a support point; see Example 7.8 in \cite{aliprantis2006infinite}.
\end{remark}

For any $\pi^{+}, \pi^{-} \in \mathcal{M}_1(\Theta)$ and  $c \geq 0$, we define $\mathcal{B}(\pi^{+}, \pi^{-},c)$ to be the following set:
\begin{align*}
 \mathcal{B}(\pi^{+}, \pi^{-},c)=
 \left\{y \in \mathbbm{Y} :
    \int_{\Theta} f_{\theta}(y) \pi^+(d\theta) 
= c\int_{\Theta} f_{\theta}(y) \pi^-(d\theta)
    \right\}.
\end{align*}
If such sets have zero probabilities for \emph{mutually singular} $\pi^{+}$ and $\pi^{-}$, then it suffices to consider those \textit{deterministic} (i.e., without randomization) generalized Bayes rules in \eqref{def:optimal_rule}, which follows immediately from Theorem \ref{thrm:general_case}. 

\begin{corollary}
    \label{cor:general_case}
    Let $p \in \cP$ be an arbitrary power function. 
Assume the conditions in Theorem \ref{thrm:general_case} hold. Further, assume for all $\pi^{+}, \pi^{-} \in \mathcal{M}_1(\Theta)$ that are mutually singular and  any $c \geq 0$,
\begin{equation}
   \label{def:zero_prob}
\mu\left(\mathcal{B}(\pi^{+}, \pi^{-},c)\right) =0.
\end{equation}
Let $\bar{\gamma}_1(y)=1$ for $y \in \bY$. 
There exist some $\pi^{+}, \pi^{-} \in \mathcal{M}_1(\Theta)$ and  $c \geq 0$ such that  ${\pi}^+$ and ${\pi}^{-}$ are mutually singular,  and $p$ is the power function associated with the decision rule $\delta^*(\cdot; \pi^{+}, \pi^{-}, c, \bar{\gamma}_1)$.  
\end{corollary}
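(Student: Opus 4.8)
The plan is to invoke Theorem \ref{thrm:general_case} to obtain a generalized Bayes representation of $p$ with a possibly nontrivial randomization function, and then to argue that under the zero-probability condition \eqref{def:zero_prob} the randomization is irrelevant, so the function $\gamma$ may be replaced by $\bar{\gamma}_1 \equiv 1$ without altering the power function.

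First I would apply Theorem \ref{thrm:general_case} to the given $p \in \cP$. Since the hypotheses of that theorem are assumed, this produces mutually singular $\pi^{+}, \pi^{-} \in \cM_1(\Theta)$, a constant $c \geq 0$, and a measurable $\gamma:\bY \to [0,1]$ such that $p$ is the power function associated with $\delta^*(\cdot; \pi^{+}, \pi^{-}, c, \gamma)$. These are precisely the ingredients the corollary asserts to exist; it remains only to exchange $\gamma$ for $\bar{\gamma}_1$.

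Next I would compare the two rules $\delta^*(\cdot; \pi^{+}, \pi^{-}, c, \gamma)$ and $\delta^*(\cdot; \pi^{+}, \pi^{-}, c, \bar{\gamma}_1)$. Inspecting the definition in \eqref{def:optimal_rule}, both rules assign the identical value ($1$ or $0$) at every $y$ for which a strict inequality holds; they can differ only on the equality set $\mathcal{B}(\pi^{+}, \pi^{-}, c)$. Because $\pi^{+}$ and $\pi^{-}$ are mutually singular, the assumption \eqref{def:zero_prob} applies and yields $\mu\big(\mathcal{B}(\pi^{+}, \pi^{-}, c)\big) = 0$. Since each $\mu_{\theta}$ has density $f_{\theta}$ with respect to $\mu$, absolute continuity gives $\mu_{\theta}\big(\mathcal{B}(\pi^{+}, \pi^{-}, c)\big) = 0$ for every $\theta \in \Theta$; that is, the equality set is null under every distribution in the family.

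Finally, for each $\theta \in \Theta$ the power function is the integral $p(\theta; \delta) = \int_{\bY} \delta(y) f_{\theta}(y)\, \mu(dy)$, and the two rules above coincide outside a $\mu_{\theta}$-null set. Hence their integrals agree for all $\theta$, so $p$ is exactly the power function of the deterministic rule $\delta^*(\cdot; \pi^{+}, \pi^{-}, c, \bar{\gamma}_1)$, as claimed. I do not expect any substantive obstacle in this argument: the only point requiring care is transferring the $\mu$-null statement to each $\mu_{\theta}$, which follows immediately from the existence of the densities $f_{\theta}$.
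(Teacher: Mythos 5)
Your proposal is correct and follows exactly the argument the paper intends (the paper states the corollary ``follows immediately from Theorem \ref{thrm:general_case}'' for precisely this reason): apply the theorem, note the two rules differ only on $\mathcal{B}(\pi^{+},\pi^{-},c)$, and use \eqref{def:zero_prob} together with the fact that each $p(\theta;\delta)=\int_{\bY}\delta(y)f_{\theta}(y)\,\mu(dy)$ is unaffected by changes on a $\mu$-null set. No gaps.
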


\begin{remark}
When $\mu$ is the Lebesgue measure on $\bR^{d_2}$ and $\{f_{\theta}: \theta \in \Theta\}$ is the family of exponential family distributions in \eqref{def:exponential_family}, the ``zero probability'' condition in \eqref{def:zero_prob} holds.
\end{remark}

\begin{remark}
By part 3) of Lemma \ref{lemma:structural}, for composite hypotheses, $\cP$ typically has an empty interior. However, when $\cP$ has a non-empty interior --- for instance, when $\Theta$ has finitely many elements --- we can apply the usual supporting hyperlane theorem \cite[Lemma 7.7]{aliprantis2006infinite} to conclude that every boundary point of $\cP$ is a support point. Consequently, in this case,   every power function $p \in \cP$ can be realized as a convex combination of at most two generalized Bayes rules of the form in  \eqref{def:optimal_rule}.
\end{remark}

\section{Simple Null versus Composite Alternative}\label{sec:simple_null_comp_alter}
Let $\theta_0 \in \Theta$. In this section, we aim to test
\begin{align*}
{\mathcal{H}}_0: \theta = \theta_0, \text{ vs }\, {\mathcal{H}}_1: \theta \in \Theta_1 := \Theta \setminus \{\theta_0\}.
\end{align*}
For instance, if $\Theta = [-K,K]$ for some $K > 0$ and $\theta_0 = 0$, then the test is two-sided.  In general, however, there is no uniformly most powerful (UMP) test \cite{romano2005testing}. As an example, if 
$f_{\theta}$ represents the density of the normal distribution with mean $\theta$ and variance 1, no UMP test exists. Our goal is to solve the following problem
\begin{align} \label{def:two_sided_prob}
\begin{split}
&\max_{\delta \in \Delta} \int_{\Theta_1} g(p(\theta;\delta)) \Lambda_1(d\theta), \\
&\text{subject to } p(\theta_0;\delta) \leq \alpha,  
\end{split}
\end{align}
where $\alpha \in (0,1)$ is a user-specified level,  $g:[0,1] \to \bR$ is a measurable function, and $\Lambda_1(\cdot)$ is a probability measure on $\Theta_1$. Note that the weighting probability $\Lambda_1(\cdot)$ is general, e.g., it can be finitely discrete.

\begin{assumption} \label{assumption_g_differentiable}
  $g$ is continuously differentiable on $(0,1)$ and continuous at $\{0,1\}$.
\end{assumption}

 If $g(t) = t$ for $t \in [0,1]$, the optimal test follows from the NP lemma. The problem becomes more interesting when $g$ is nonlinear, such as $g(t) = t^\kappa$ for $ t \in [0,1]$, where $\kappa > 0$ is given.  Another example, inspired by the literature on prospect theory-based hypothesis testing \cite{Gezici_PHT,ProsHT}, is $g(t) = \omega_{v}(t)$ 
 for $t \in [0,1]$, where $v > 0$ is user-specified, and
\begin{equation}
    \label{def:w_v}
\omega_{v}(t) := \frac{t^{v}}{\left(t^v +(1-t)^v\right)^{1/v}}.
\end{equation}
To derive stronger results, we impose the following assumption \textit{in certain cases}, which requires $g$ to be strictly increasing. This assumption is satisfied by the examples mentioned above.

 \begin{assumption}
     \label{assumption_g_increasing}
    $g'(t) > 0$ for $0 < t < 1$.
 \end{assumption}

 Below, we denote by $\bar{\delta}_0$ the decision rule that always accepts $\mathcal{H}_0$, that is, $\bar{\delta}_0(y) = 0$ for $y \in \bY$.




\begin{theorem}\label{theorem:simple_null_structrual}
Suppose that Assumptions \ref{assumption:continuity} and \ref{assumption_g_differentiable} hold, and that 
$\bar{\delta}_0$ is not an optimal solution to problem \eqref{def:two_sided_prob}. 
\begin{enumerate}[label=\roman*)]
    \item 
There exists an optimal rule $\delta^* \in \Delta$ for problem \eqref{def:two_sided_prob} such that for  some constant $\kappa \in \bR$ and a measurable function $\gamma:\bY \to [0,1]$,
 \begin{align}\label{def: optimal_rule_simple_null}
  \delta^*(y) = \begin{cases}
      1, \;&\text{ if }  \;H(y) > \kappa {f_{\theta_0}(y)} \\
      \gamma(y), \;&\text{ if }  \;H(y) = \kappa {f_{\theta_0}(y)}\\
      0, \;&\text{ if }  \;H(y) < \kappa{f_{\theta_0}(y)}
  \end{cases},
 \end{align}
for each $y \in \bY$, where 
\begin{align}
    \label{def:h_function}
    H(y) :=\int_{\Theta_1} g'(p(\theta; \delta^*)) {f_{\theta}(y)} \Lambda_1(d\theta).
\end{align}

\item For any optimal rule $\delta^* \in \Delta$, it must take the form given in \eqref{def: optimal_rule_simple_null}, with the function $H(\cdot)$ defined in \eqref{def:h_function}, for $\mu$-a.e.~$y$.

\item If, in addition to Assumptions \ref{assumption:continuity} and \ref{assumption_g_differentiable}, Assumption \ref{assumption_g_increasing} holds, then $\bar{\delta}_0$ is not an optimal solution to problem \eqref{def:two_sided_prob}, and for any optimal rule $\delta^*\in \Delta$, we have $p(\theta_0; \delta^*) = \alpha$.
\end{enumerate}
\end{theorem}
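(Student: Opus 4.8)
The plan is to treat all three parts through a single first-order (variational) analysis of the objective $J(\delta) := \int_{\Theta_1} g(p(\theta;\delta))\,\Lambda_1(d\theta)$, combined with the generalized Neyman--Pearson lemma. The crucial preliminary observation is that the power function of \emph{any} optimal rule $\delta^*$ is pinned into a compact subset of $(0,1)$. Indeed, since $\bar\delta_0$ is not optimal, $\delta^*$ cannot have an identically-zero power function (otherwise it would match $J(\bar\delta_0)=g(0)$ and $\bar\delta_0$ would be optimal too), so the mutual absolute continuity in Assumption \ref{assumption:continuity} forces $p(\theta_0;\delta^*)>0$; together with the constraint $p(\theta_0;\delta^*)\le\alpha<1$ this gives $0<p(\theta_0;\delta^*)<1$, whence part 4) of Lemma \ref{lemma:structural} yields $0<\inf_\theta p(\theta;\delta^*)\le\sup_\theta p(\theta;\delta^*)<1$. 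Consequently $g'(p(\theta;\delta^*))$ is well-defined and uniformly bounded in $\theta$ (Assumption \ref{assumption_g_differentiable}), so $H$ in \eqref{def:h_function} is a well-defined, integrable function. Existence of an optimal rule itself follows by compactness: $\cP$ is convex and compact in $(C(\Theta),\|\cdot\|_\infty)$ by Lemma \ref{lemma:structural}, the feasible set $\{p\in\cP:p(\theta_0)\le\alpha\}$ is closed hence compact, and $p\mapsto\int_{\Theta_1} g(p(\theta))\Lambda_1(d\theta)$ is $\|\cdot\|_\infty$-continuous because $g$ is uniformly continuous on $[0,1]$.

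For parts i) and ii), the plan is to fix an optimal $\delta^*$, take any feasible $\delta$, and differentiate $J$ along the segment $\delta_\epsilon=(1-\epsilon)\delta^*+\epsilon\delta$. Since $p(\theta;\cdot)$ is affine in the rule, $p(\theta;\delta_\epsilon)=p(\theta;\delta^*)+\epsilon\,(p(\theta;\delta)-p(\theta;\delta^*))$ stays in a compact subset of $(0,1)$ for small $\epsilon$, so dominated convergence justifies differentiating under the $\Lambda_1$-integral; a Fubini interchange (legitimate because $g'(p(\cdot;\delta^*))$ is bounded and $\int_{\Theta_1}\!\int_{\bY} f_\theta\,d\mu\,\Lambda_1(d\theta)=1$) then gives
\[
\frac{d}{d\epsilon}J(\delta_\epsilon)\Big|_{\epsilon=0^+}=\int_{\bY}\big(\delta(y)-\delta^*(y)\big)H(y)\,\mu(dy).
\]
Optimality of $\delta^*$ over the convex feasible set forces this one-sided derivative to be $\le 0$ for every feasible $\delta$, which says precisely that $\delta^*$ maximizes the linear functional $\delta\mapsto\int_{\bY}\delta H\,d\mu$ subject to $\int_{\bY}\delta f_{\theta_0}\,d\mu\le\alpha$. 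Applying the generalized Neyman--Pearson lemma to this linearized problem produces a multiplier $\kappa$ for which every maximizer---in particular $\delta^*$---equals $1$ on $\{H>\kappa f_{\theta_0}\}$ and $0$ on $\{H<\kappa f_{\theta_0}\}$, the equality set being exactly the randomization region $\gamma(\cdot)$; this is \eqref{def: optimal_rule_simple_null}. Since this holds for an arbitrary optimal rule we obtain part ii), and combined with the existence argument above we obtain part i).

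For part iii), I would first show $\bar\delta_0$ is not optimal by exhibiting the constant rule $\delta\equiv\alpha$: it is feasible since $p(\theta_0;\delta)=\alpha$, and its objective is $g(\alpha)>g(0)=J(\bar\delta_0)$ because $\alpha>0$ and $g$ is now strictly increasing (Assumption \ref{assumption_g_increasing}). Hence the preliminary bound applies to every optimal $\delta^*$. To show the constraint is active, suppose for contradiction that $p(\theta_0;\delta^*)<\alpha$ and perturb toward the always-reject rule via $\delta_t=(1-t)\delta^*+t$. This preserves feasibility for small $t>0$ (by continuity of $t\mapsto p(\theta_0;\delta_t)$ at the strictly interior value $p(\theta_0;\delta^*)<\alpha$), and the same differentiation argument gives
\[
\frac{d}{dt}J(\delta_t)\Big|_{t=0^+}=\int_{\Theta_1} g'\big(p(\theta;\delta^*)\big)\big(1-p(\theta;\delta^*)\big)\,\Lambda_1(d\theta)>0,
\]
the integrand being strictly positive because $g'>0$ on $(0,1)$ and $0<p(\theta;\delta^*)<1$. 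This contradicts the optimality of $\delta^*$, forcing $p(\theta_0;\delta^*)=\alpha$.

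The main obstacle I expect is the differentiation-under-the-integral together with the Fubini step: both are legitimate only because the power function of an optimal rule is confined to a compact subset of $(0,1)$, so establishing that confinement via Lemma \ref{lemma:structural}(4) (after ruling out the degenerate all-accept and all-reject rules) is the linchpin on which the well-definedness of $H$ and every derivative computation rest. A secondary point requiring care is invoking the generalized Neyman--Pearson lemma in the correct form---allowing $H$ to change sign and a general threshold multiplier---and matching its freely-specified region on $\{H=\kappa f_{\theta_0}\}$ to the equality case $\gamma(\cdot)$ in \eqref{def: optimal_rule_simple_null}.
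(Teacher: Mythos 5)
Your proposal is correct and follows essentially the same route as the paper's proof: existence via compactness of $\cP$ and continuity of the objective, confinement of the optimal power function to a compact subset of $(0,1)$ via part 4) of Lemma \ref{lemma:structural}, linearization along convex combinations with dominated convergence and Fubini, and then the generalized Neyman--Pearson lemma for the threshold form, with the constant rule $\bar{\delta}_{\alpha}$ and mixing toward the always-reject rule handling part iii). The only cosmetic differences are that the paper first converts the inequality constraint to the equality constraint $\int \delta^* f_{\theta_0}\,d\mu = c^* \in (0,1)$ before invoking \cite[Theorem 3.6.1 (iv)]{romano2005testing}, and proves activeness of the constraint by a finite mixture rather than your one-sided derivative, both of which are equivalent in substance.
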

\begin{proof}
See Appendix \ref{app:proof_two_sided_theorem}.
\end{proof}

\begin{remark}  The proof strategy for Theorem \ref{theorem:simple_null_structrual} is to establish that $\delta^*$ is the optimal decision rule for a \emph{simple versus simple} hypothesis testing problem by linearizing the objective function in \eqref{def:two_sided_prob} at $p(\cdot;\delta^*) \in \cP$.
\end{remark}

\begin{remark}
In general, such as when $g$ is a decreasing function,  $\bar{\delta}_0$ may be an optimal solution to problem \eqref{def:two_sided_prob}. 
In Theorem \ref{theorem:simple_null_structrual}, we exclude this case, which implies that for any optimal rule $\delta^*$,  $p(\theta; \delta^*) \in (0,1)$ for all $\theta \in \Theta$, and thus $g'(p(\theta; \delta^*) )$ is well defined.
\end{remark}

\begin{remark}\label{remark:1_2}
Assumption \ref{assumption:continuity} holds under mild conditions; for example, it holds if for $\mu$-a.e.~$y$, $\theta \in \Theta \mapsto f_{\theta}(y) \in [0,\infty)$ is continuous, and $\sup_{\theta \in \Theta}|f_{\theta}(y)| \leq F(y)$ where $\int F(y) \mu(dy) < \infty$. 
Assumptions \ref{assumption_g_differentiable} and \ref{assumption_g_increasing} concern the function $g$ in \eqref{def:two_sided_prob}. 
\end{remark}

The preceding theorem establishes that an optimal rule exists, which rejects (resp.~accept) $\mathcal{H}_0$ if the function $H(\cdot)$ is strictly above (resp.~below) a multiple of $f_{\theta_0}$. Further, any optimal rule must be of this form. It is clear that the optimal rule in \eqref{def: optimal_rule_simple_null} is a special case of \eqref{def:optimal_rule}, where $\pi^{-}$ is supported on the singleton $\{\theta_0\}$, and if $g$ is strictly increasing, $\pi^{+}$ is supported on $\Theta_1$.

The above structural results can significantly be simplified under additional assumptions.

\begin{corollary}\label{cor:convexity}
Suppose that Assumptions \ref{assumption:continuity}, \ref{assumption_g_differentiable}, and \ref{assumption_g_increasing} hold. 
Let $T: \mathbbm{Y} \to \mathbbm{R}$ be a measurable function, and assume that for each $\theta \in \Theta_1$, there exists a \emph{strictly} convex function $\phi_{\theta}:\mathbbm{R} \to \mathbbm{R}$ such that
    \begin{align}
        \label{assumption:convex_suff}
    \frac{f_{\theta}(y)}{f_{\theta_0}(y)} = \phi_{\theta}(T(y)), \;\;\text{ for } y \in \mathbbm{Y}.
        \end{align}
Then, there exists an optimal rule $\delta^*$ for problem \eqref{def:two_sided_prob} such that $p(\theta_0; \delta^*) = \alpha$, and for some constants $-\infty \leq \ell < u \leq \infty$ and a measurable function $\gamma:\bY \to [0,1]$, we have that for each $y \in \bY$,
 \begin{align} \label{def:optimal_rule_Ty}
  \delta^*(y) = \begin{cases}
      1, \;&\text{ if }  \;T(y) > u \text{ or } T(y) < \ell\\
      \gamma(y), \;&\text{ if }  \;T(y) \in \{\ell, u\} \\
      0, \;&\text{ if }  \;\ell < T(y) < u
  \end{cases}.
 \end{align}
 Further, assume $\mu(\{y \in \bY: T(y) = c\}) = 0$ for $c \in \bR$. Then, for some constants $-\infty \leq \ell < u \leq \infty$,
\begin{equation*}
\delta^*(y) = \mathbbm{1}\{T(y) \geq u \text{ or }  \ T(y) \leq \ell\}.
\end{equation*}
\end{corollary}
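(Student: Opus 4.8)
The plan is to reduce the statement to Theorem \ref{theorem:simple_null_structrual} and then use the convexity hypothesis \eqref{assumption:convex_suff} to turn the thresholding of $H(\cdot)$ against $f_{\theta_0}$ into the thresholding of a strictly convex function of the scalar statistic $T$. Since Assumptions \ref{assumption:continuity}, \ref{assumption_g_differentiable} and \ref{assumption_g_increasing} all hold, part iii) of Theorem \ref{theorem:simple_null_structrual} provides an optimal rule $\delta^*$ of the form \eqref{def: optimal_rule_simple_null} with $p(\theta_0;\delta^*) = \alpha$. First I would record that $p(\theta;\delta^*)\in(0,1)$ for \emph{every} $\theta\in\Theta$: indeed $0<p(\theta_0;\delta^*)=\alpha<1$, so part 4) of Lemma \ref{lemma:structural} yields $0<\inf_{\theta}p(\theta;\delta^*)\le\sup_{\theta}p(\theta;\delta^*)<1$, and by part 1) the range of the continuous $p(\cdot;\delta^*)$ over the compact $\Theta$ is a compact subinterval of $(0,1)$. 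Hence $g'(p(\theta;\delta^*))$ is well defined, strictly positive (Assumption \ref{assumption_g_increasing}) and bounded.

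Next I would divide the three cases in \eqref{def: optimal_rule_simple_null} by $f_{\theta_0}(y)$, which is strictly positive $\mu$-a.e.\ by the common-support part of Assumption \ref{assumption:continuity}, and substitute \eqref{assumption:convex_suff}. Setting
\[
\Psi(t) := \int_{\Theta_1} g'\!\bigl(p(\theta;\delta^*)\bigr)\,\phi_{\theta}(t)\,\Lambda_1(d\theta),
\]
the rule \eqref{def: optimal_rule_simple_null} reads, for $\mu$-a.e.\ $y$: $\delta^*(y)=1$ if $\Psi(T(y))>\kappa$, $\delta^*(y)=0$ if $\Psi(T(y))<\kappa$, and $\delta^*(y)=\gamma(y)$ if $\Psi(T(y))=\kappa$. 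Because each $\phi_\theta$ is \emph{strictly} convex and the weight $g'(p(\theta;\delta^*))\,\Lambda_1(d\theta)$ is a nonzero positive measure, $\Psi$ is strictly convex: the pointwise strict Jensen inequality, integrated against a nonzero positive measure, stays strict. This is the step where strict (rather than mere) convexity of the $\phi_\theta$ is essential.

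The structural form \eqref{def:optimal_rule_Ty} then follows from the elementary geometry of a strictly convex $\Psi$: the sublevel set $\{t:\Psi(t)\le\kappa\}$ is a closed interval $[\ell,u]$ (endpoints possibly infinite), $\{t:\Psi(t)<\kappa\}=(\ell,u)$, the equality set $\{t:\Psi(t)=\kappa\}$ is contained in the two finite endpoints, and $\{t:\Psi(t)>\kappa\}=\{t<\ell\}\cup\{t>u\}$. Substituting $t=T(y)$ reproduces \eqref{def:optimal_rule_Ty} with the randomization carried by $\{T(y)\in\{\ell,u\}\}$. To secure $\ell<u$ I would exclude the two degenerate configurations: if $(\ell,u)=\bR$ then $\Psi<\kappa$ everywhere and $\delta^*=\bar\delta_0$ $\mu$-a.e., contradicting the non-optimality of $\bar\delta_0$ from part iii); if $\{t:\Psi(t)\le\kappa\}$ is empty then $\delta^*$ rejects $\mu$-a.e., forcing $p(\theta_0;\delta^*)=1\ne\alpha$.

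Finally, under the extra assumption $\mu(\{y\in\bY:T(y)=c\})=0$ for every $c\in\bR$, the randomization set $\{T(y)\in\{\ell,u\}\}$ is $\mu$-null (empty when an endpoint is infinite). Redefining $\delta^*$ to equal $1$ on this set alters it only on a $\mu$-null set, hence—since every $\mu_\theta$ is mutually absolutely continuous with $\mu$ by Assumption \ref{assumption:continuity}—leaves all power functions, and therefore both the objective and the constraint $p(\theta_0;\cdot)=\alpha$, unchanged. This produces the deterministic optimal rule $\delta^*(y)=\mathbbm{1}\{T(y)\ge u\text{ or }T(y)\le\ell\}$. I expect the main obstacle to be the bookkeeping that confines the equality region to the two points $\{\ell,u\}$ and rules out the degenerate interval cases; in particular, the only genuinely delicate point in establishing $\ell<u$ is a possible $\mu_{\theta_0}$-atom of $T$ carrying mass $\ge 1-\alpha$, which does not occur under the non-atomicity hypothesis used for the deterministic conclusion and should be flagged as the borderline case.
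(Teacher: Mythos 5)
Your proposal is correct and follows essentially the same route as the paper's own proof: invoke Theorem \ref{theorem:simple_null_structrual} (with part iii) giving $p(\theta_0;\delta^*)=\alpha$), rewrite $H(y)/f_{\theta_0}(y)$ as $\Psi(T(y))$ where $\Psi(t)=\int_{\Theta_1} g'(p(\theta;\delta^*))\,\phi_{\theta}(t)\,\Lambda_1(d\theta)$ is strictly convex, use the sublevel-set geometry of a strictly convex function to get the interval structure, and pass to the deterministic rule by modifying $\delta^*$ on the $\mu$-null set $\{T(y)\in\{\ell,u\}\}$. If anything, you are slightly more careful than the paper: your explicit exclusion of the degenerate configurations (empty or full sublevel set, and the borderline singleton case tied to a $\mu_{\theta_0}$-atom of $T$ with mass at least $1-\alpha$) addresses a point that the paper's containment chain $(\ell,u)\subset\{\Psi<\kappa\}\subset\{\Psi\leq\kappa\}\subset[\ell,u]$ passes over in silence.
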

\begin{proof}
See Appendix \ref{app:proof_corollaries}.
\end{proof}

\begin{remark}
The function $T(y)$ in \eqref{assumption:convex_suff} is a sufficient statistic for the family $\{f_{\theta}(\cdot); \theta \in \Theta\}$.
\end{remark}

\subsection{Exponential Family Distributions}

In this subsection, we illustrate the preceding results with examples from the single-parameter exponential family distributions \cite[Chapter~4.2]{van2000asymptotic}. Specifically, let
\begin{equation}
    \label{def:exponential_family}
f_{\theta}(y) = c(\theta) h(y) e^{\theta T(y)},
\end{equation}
where $h: \bY \to [0,\infty)$ and $T:\bY \to \bR$ are given, and the domain is
$$
\mathcal{D} := \{\theta  \in \bR: c(\theta)^{-1} :=  \int_{\bY} h(y) e^{\theta T(y)} \mu(dy) < \infty\}.
$$
Assume that $\mathcal{D}$ is open and that $\Theta \subset \mathcal{D}$ is compact.

\begin{corollary}\label{cor:exponential_family}
For the exponential family distributions in \eqref{def:exponential_family},
 Assumption \ref{assumption:continuity} 
and condition \eqref{assumption:convex_suff} hold.
\end{corollary}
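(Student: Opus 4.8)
The plan is to verify the two requirements separately, handling condition \eqref{assumption:convex_suff} first since it reduces to a direct computation, and then Assumption \ref{assumption:continuity}, whose total-variation part requires the only nontrivial argument.

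For condition \eqref{assumption:convex_suff}, I would form the likelihood ratio directly from \eqref{def:exponential_family}: since the factor $h(y)$ cancels, $f_{\theta}(y)/f_{\theta_0}(y) = (c(\theta)/c(\theta_0))\, e^{(\theta - \theta_0)T(y)}$. I therefore set $\phi_{\theta}(t) := (c(\theta)/c(\theta_0))\, e^{(\theta - \theta_0)t}$, so that $f_{\theta}/f_{\theta_0} = \phi_{\theta}(T(\cdot))$ holds pointwise on $\bY$. Strict convexity follows by differentiating twice: $\phi_{\theta}''(t) = (c(\theta)/c(\theta_0))\,(\theta - \theta_0)^2\, e^{(\theta - \theta_0)t}$, which is strictly positive because the normalizing constant satisfies $c(\cdot) > 0$ and, for $\theta \in \Theta_1 = \Theta \setminus \{\theta_0\}$, one has $\theta \neq \theta_0$ and hence $(\theta - \theta_0)^2 > 0$. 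This settles \eqref{assumption:convex_suff} and, via Corollary \ref{cor:convexity}, supplies exactly the input needed for the interval-threshold characterization there.

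For Assumption \ref{assumption:continuity}, mutual absolute continuity is immediate: because $c(\theta) > 0$ and $e^{\theta T(y)} > 0$ for every $\theta$, the vanishing set $\{y \in \bY: f_{\theta}(y) = 0\}$ equals $\{y \in \bY: h(y) = 0\}$ independently of $\theta$, so any two members of the family share the same null sets. For the total-variation continuity, I would prove pointwise convergence $f_{\theta_n}(y) \to f_{\theta}(y)$ for $\mu$-a.e.~$y$ whenever $\theta_n \to \theta$, and then invoke Scheff\'e's lemma; the hypothesis $\int_{\bY} f_{\theta_n}\, d\mu \to \int_{\bY} f_{\theta}\, d\mu$ of that lemma is automatic since every $f_{\theta}$ integrates to one. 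The single missing ingredient is continuity of $\theta \mapsto c(\theta)$, equivalently of $\theta \mapsto c(\theta)^{-1} = \int_{\bY} h(y) e^{\theta T(y)} \mu(dy)$.

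I expect this last continuity to be the main (if modest) obstacle, and I would establish it by dominated convergence, exploiting the openness of $\mathcal{D}$. Given $\theta \in \Theta \subset \mathcal{D}$, openness furnishes $\theta_- < \theta < \theta_+$ with $\theta_{\pm} \in \mathcal{D}$; splitting on the sign of $T(y)$ gives the elementary bound $e^{\theta' T(y)} \leq e^{\theta_- T(y)} + e^{\theta_+ T(y)}$ valid for all $\theta' \in [\theta_-, \theta_+]$, so that $y \mapsto h(y)\big(e^{\theta_- T(y)} + e^{\theta_+ T(y)}\big)$ is an integrable dominating function, its finiteness being precisely the condition $\theta_{\pm} \in \mathcal{D}$. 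Dominated convergence then yields continuity of $c(\cdot)^{-1}$, hence of $c(\cdot)$, and combining with the continuity of $\theta \mapsto e^{\theta T(y)}$ gives the required pointwise convergence of $f_{\theta_n}$; Scheff\'e's lemma finishes the total-variation claim. As an alternative to Scheff\'e, the same envelope dominates the $f_{\theta_n}$ themselves on a neighborhood of $\theta$, so one may instead apply dominated convergence directly to $|f_{\theta_n} - f_{\theta}|$.
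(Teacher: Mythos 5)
Your proposal is correct and follows essentially the same route as the paper: the verification of condition \eqref{assumption:convex_suff} is the identical computation with $\phi_{\theta}(z) = \frac{c(\theta)}{c(\theta_0)} e^{(\theta-\theta_0)z}$, and your treatment of Assumption \ref{assumption:continuity} is the dominated-convergence argument that the paper delegates to Remark \ref{remark:1_2}. The only difference is one of completeness: the paper leaves implicit exactly the points you spell out --- the integrable envelope $h(y)\bigl(e^{\theta_- T(y)} + e^{\theta_+ T(y)}\bigr)$ obtained from the openness of $\mathcal{D}$, the resulting continuity of $c(\cdot)$, and the mutual absolute continuity of the family (which Remark \ref{remark:1_2} does not address) --- so your write-up is a strictly more detailed version of the same proof, with Scheff\'e's lemma and direct domination of $|f_{\theta_n}-f_{\theta}|$ serving as interchangeable final steps.
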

\begin{proof}
    See Appendix \ref{app:proof_corollaries}.
\end{proof}

If Assumptions \ref{assumption_g_differentiable} and \ref{assumption_g_increasing} hold, which concern the function $g$ in \eqref{def:two_sided_prob}, then Corollary \ref{cor:convexity} can be applied to the above exponential family distributions. More concretely, let $\mu$ be the Lebesgue measure and $\bY = \bR$. 
Corollary \ref{cor:convexity} applies to the following families $\{f_{\theta}: \theta \in \Theta\}$:
\begin{itemize}
    \item Normal (Gaussian) densities with a known variance $\sigma^2 > 0$: $f_{\theta}(y) = \frac{1}{\sqrt{2\pi} \sigma
    }\exp\left(-\frac{1}{2\sigma^2}(y-\theta)^2\right)$ for $y \in \bR$.
    \item Exponential densities: $f_{\theta}(y) = \theta \exp(-\theta y) \mathbbm{1}\{y > 0\}$  for $y \in \bR$.
    \item Beta densities with a single shape parameter: $f_{\theta}(y) = \theta y^{\theta-1} \mathbbm{1}\{0<y<1\}$  for $y \in \bR$.
\end{itemize}
For the above examples, the condition that $\mu(\{y \in \bY: T(y) = c\}) = 0$ for $c \in \bR$ also holds, and thus there exists a deterministic optimal rule as shown in Corollary \ref{cor:convexity}.

In addition, Corollary \ref{cor:convexity} applies to the discrete exponential family distributions, with $\mu$ being the counting measure on $\bR$. Examples include Binomial, Geometric, and Poisson distributions.

Finally, if the observation $Y = (Y_1,\ldots,Y_n)$, where $Y_1,\ldots,Y_n$ are independent and identically distributed according to one of the distributions mentioned above, then $Y$ has a density of the form in   \eqref{def:exponential_family} with respect to some product measure. Therefore, Corollary \ref{cor:convexity} applies.


\subsection{Numerical Example - Normal Distributions}

In this subsection, we consider a numerical study using the normal distribution family with a known variance. Specifically, let $\Theta = [-1,1]$ and $\theta_0 = 0$. Let $\mu$ be the Lebesgue measure and $f_{\theta}$ be the density of the normal distribution with mean $\theta$ and variance $1$. By Corollary \ref{cor:convexity} and \ref{cor:exponential_family}, it suffices to consider the following rules:
$$
\tilde{\delta}_{\ell}(y)  = \mathbbm{1}\{y \geq \Phi^{-1}(1 - (\alpha - \Phi(\ell))) \text{ or }  \ y \leq \ell\},
$$
where $-\infty \leq \ell \leq \Phi^{-1}(\alpha)$ and $\Phi$ and $\Phi^{-1}$ are the cumulative distribution function and the quantile function of the standard normal distribution. For $-\infty \leq \ell \leq \Phi^{-1}(\alpha)$, the power function of $\tilde{\delta}_{\ell}$ is
\begin{align*}
p(\theta;\tilde{\delta}_{\ell}) = \int \tilde{\delta}_{\ell}(y) f_{\theta}(y) dy =
\Phi(\ell - \theta) + 1 - \Phi\left(\Phi^{-1}(1 - (\alpha - \Phi(\ell)) - \theta\right).
\end{align*}
Further, the value of the objective function is
$\int_{\Theta_1} g(p(\theta;\tilde{\delta}_{\ell})) \Lambda_1(d\theta)
$. 
A grid search over $\ell \in [-\infty, \Phi^{-1}(\alpha)]$ would yield an optimal rule for the problem in \eqref{def:two_sided_prob}.

Now, consider the following more specific optimization problem:
\begin{align*} 
\begin{split}
& \max_{\delta \in \Delta}\; \beta \int_{-1}^{0} \omega_{v}(p(\theta;\delta)) d\theta
+ \int_{0}^{1} \omega_{v}(p(\theta;\delta)) d\theta
, \\ 
&\text{subject to } p(0;\delta) \leq \alpha,
\end{split}
\end{align*}
where  $\beta > 0$, $v > 0$, and $\omega_v$ is defined in \eqref{def:w_v}. 
The optimal rule is given by $\tilde{\delta}_{\ell^*}$, where $\ell^*$ maximizes the function that maps $\ell \in [-\infty, \Phi^{-1}(\alpha)]$ to
\begin{equation}
\label{def:object_ell}
\beta \int_{-1}^{0} \omega_{v}(p(\theta;\tilde{\delta}_{\ell})) d\theta
+ \int_{0}^{1} \omega_{v}(p(\theta;\tilde{\delta}_{\ell})) d\theta.
\end{equation}

\begin{table*}
\centering
\caption{Optimal $\ell^*$ corresponding to various $\beta$ for the function in \eqref{def:object_ell}.}
\begin{tabular}{|c|c|c|c|c|c|c|c|}
\hline
$\beta$  & 1/3     & 1/2     & 2/3     & 1       & 1.5     & 2       & 3       \\ \hline
$\ell^*$ & -2.3198 & -2.0382 & -1.8587 & -1.6447 & -1.4874 & -1.4101 & -1.3419 \\ \hline
\end{tabular}
\label{tab:prospect}
\end{table*}

We set $\alpha = 10\%$ and $v = 0.69$. For different values of $\beta$, we report the optimal $\ell^*$ in Table \ref{tab:prospect} using numerical integration. Note that for $\beta = 1$, due to symmetry, we have $\ell^* = \Phi^{-1}(\alpha/2)$. Further, in Fig.~\ref{fig:beta_2_over_3}, for $\beta = 2/3$, we plot the objective value as a function of $\ell$ from $-4$ to $\Phi^{-1}(0.1)$.

\begin{figure}[t!]
    \centering
\includegraphics[width=0.75\linewidth]{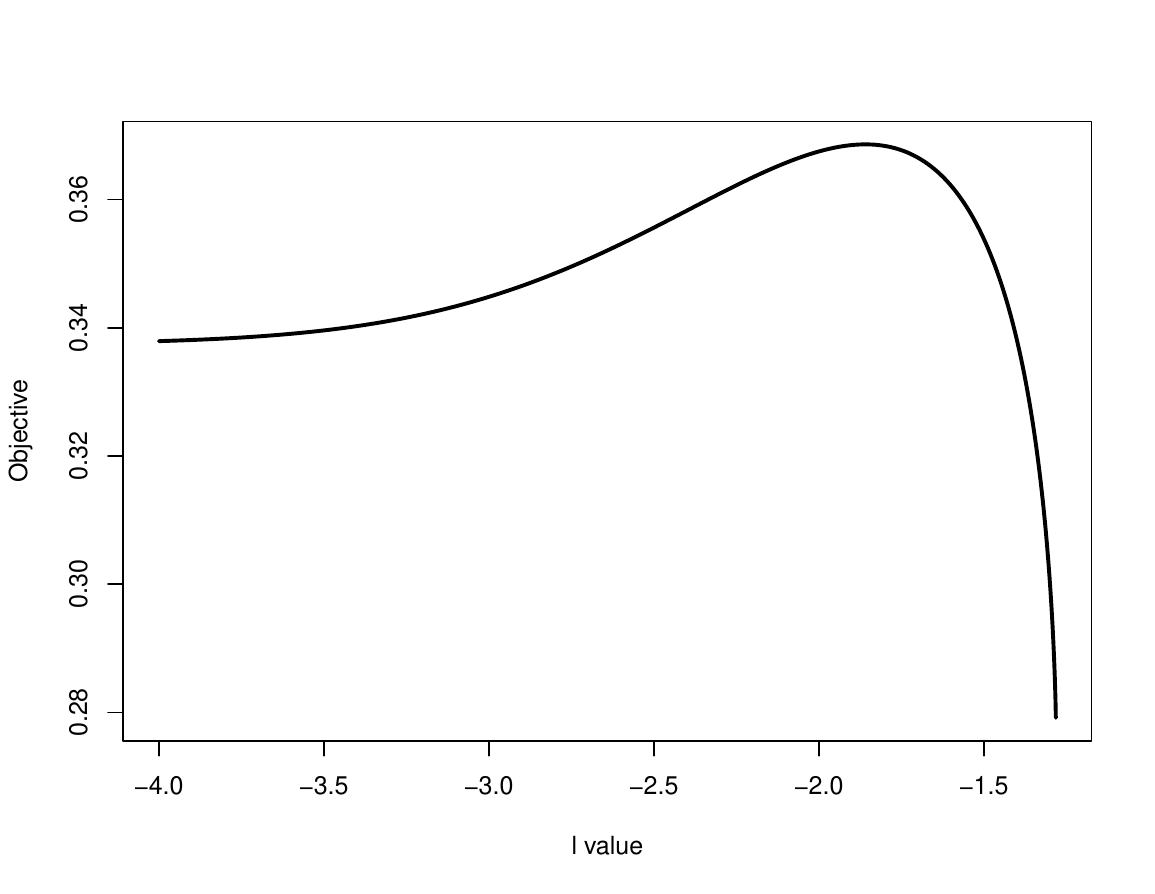}
    \caption{The value of the objective in \eqref{def:object_ell} as a function $\ell$  for $\beta 
    =2/3$ and $v = 0.69$.}
    \label{fig:beta_2_over_3}
\end{figure}

\subsection{Numerical Example - Binomial Distributions}

In this subsection, we consider a numerical study using the Binomial distribution with $n$ trials and success probability $\theta$. Specifically, let $\theta_0 = 0.5$ and $\Theta = [0.4,0.6]$. We aim to solve the following problem: 
\begin{align} \label{def:binomial_prospect}
\begin{split}
& \max_{\delta \in \Delta} \; \beta \int_{0.4}^{0.5} \omega_{v}(p(\theta;\delta)) d\theta
+ \int_{0.5}^{0.6} \omega_{v}(p(\theta;\delta)) d\theta
, \\ 
&\text{subject to } p(0.5;\delta) \leq \alpha,
\end{split}
\end{align}
where we recall that $\omega_{v}(\cdot)$ is defined in \eqref{def:w_v},  $\beta > 0$, and $\alpha \in (0,1)$. By Corollary \ref{cor:convexity} and \ref{cor:exponential_family}, there exists an optimal rule of the following form: for $0\leq y \leq n$,
\begin{align*} 
  \tilde{\delta}_{\ell,u,p_{\ell},p_{u}}(y) = \begin{cases}
      1, \;&\text{ if }  \; y > u \text{ or } y < \ell\\
      p_u, \;&\text{ if }  \;y = u \\
      p_{\ell}, \;&\text{ if }  \;y = \ell \\
      0, \;&\text{ if }  \; \ell < y < u
  \end{cases},
 \end{align*}
 where $0 \leq \ell,u \leq n$ and $0 \leq p_{\ell},p_{u} \leq 1$ are chosen such that
 \begin{align*}
\alpha =     \bP_{\theta_0}(Y > u) +  \bP_{\theta_0}(Y < \ell) +      p_{\ell} \times \bP_{\theta_0}(Y =  \ell) +
          p_{u} \times \bP_{\theta_0}(Y = u).
 \end{align*} 
 Here, $\bP_{\theta_0}$ means that $Y$ has the binomial distribution with $n$ trials and success probability given by $\theta_0$. Once $\ell$ and $p_{\ell}$ are chosen, $u$ and $p_{u}$ can be solved based on the above equation if they exist. Thus a grid search over $\ell$ and $p_{\ell}$ would yield an optimal solution.

For $n = 10$, $\alpha = 9\%$, and $v = 0.69$, we report in Table \ref{tab:binomial_prospect} the optimal values of $(\ell^*,u^*,p_{\ell^*},p_{u^*})$ for various $\beta$.
Note that due to symmetry, for $\beta = 1$, we have $\ell^* + u^* = 10$ and $p_{\ell^*} = p_{u^*}$. Further, in Fig.~\ref{fig:binomial}, we plot the objective function value as a function of $p_{\ell}$ for $\beta = 1.05$ and $\ell = 2$.

\begin{table}[t!]
\centering
\caption{Optimal $\ell^*,u^*,p_{\ell^*},p_{u^*}$ corresponding to  various $\beta$ for Problem \eqref{def:binomial_prospect}.}
\begin{tabular}{|c|c|c|c|c|c|}
\hline
$\beta$  & 0.5    & 0.97     & 1    & 1.05      & 1.2          \\ \hline
$\ell^*$ & 1 & 2 & 2 & 2 & 2    \\ \hline
$u^*$ & 7 & 8 & 8 & 8 & 8    \\ \hline
$p_{\ell}^*$ & 1 & 0.7189 & 0.7799 & 0.8776 & 1    \\ \hline
$p_{u}^*$ & 0.2097 & 0.8402 & 0.7792 & 0.6814 & 0.5591    \\ \hline
\end{tabular}
\label{tab:binomial_prospect}
\end{table}

\begin{figure}[t!]
    \centering
\includegraphics[width=0.75\linewidth]{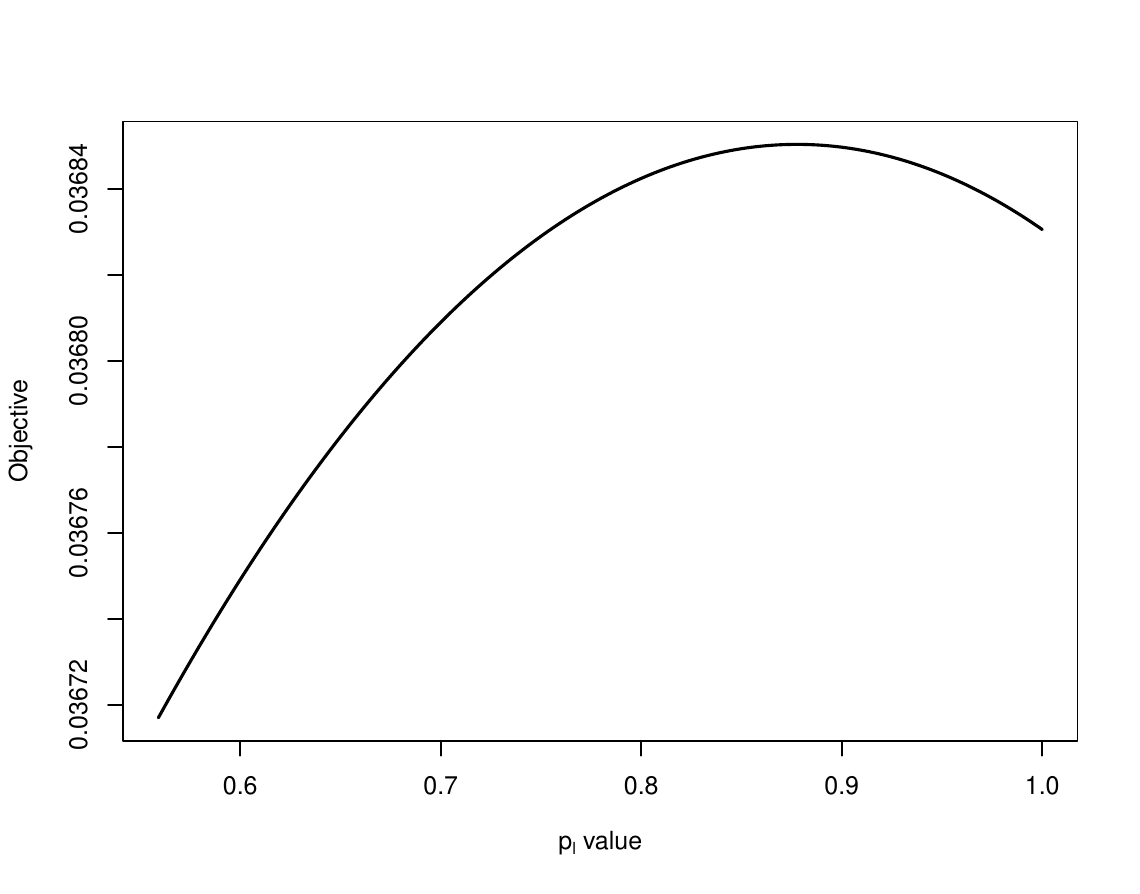}
    \caption{The value of the objective in \eqref{def:binomial_prospect} as a function $p_{\ell}$  for $\ell = 2$, $\beta 
    =1.05$, and $v = 0.69$.}
    \label{fig:binomial}
\end{figure}

\subsection{Discussions}
As previously discussed, we focus on cases where no UMP test exists. However, Theorem \ref{theorem:simple_null_structrual} also applies when UMP tests do exist, recovering their existence and structural properties, which is well known in the literature \cite{romano2005testing}.

Specifically, assume that $g(t) =t$ for $t \in [0,1]$ in this subsection. When $\Theta_1$ contains a single element, say $\theta_1$, the probability measure $\Lambda_1$ is the Dirac measure at $\theta_1$. The problem \eqref{def:two_sided_prob} reduces to the following:
\begin{align*}
    \max_{\delta \in \Delta}\ p(\theta_1;\delta), \;\; \text{ subject to } p(\theta_0;\delta) \leq \alpha.
\end{align*}
Then, Theorem \ref{theorem:simple_null_structrual} reduces to the well-known NP lemma \cite[Theorem 3.2.1]{romano2005testing},
since $H(\cdot)$ in \eqref{def:h_function} is $f_{\theta_1}(\cdot)$.

Further, consider the case that the parameter is a scalar, i.e., $\Theta \subset \bR$. Assume the following monotone likelihood ratio (MLR) property \cite{romano2005testing} holds: there exists a measurable function $T:\bY \to \bR$ such that $f_{\theta'}(y)/f_{\theta}(y)$ is non-decreasing function of $T(y)$ for any $\theta, \theta' \in \Theta$ and $\theta < \theta'$. If $\theta_0 < \theta$ for $\theta \in \Theta_1$, then $H(y)$ is a non-decreasing function of $T(y)$. By  Theorem \ref{theorem:simple_null_structrual}, for any probability measure $\Lambda_1$ on $\Theta_1$, there exists an optimal rule for problem \eqref{def:two_sided_prob} of the following form: for $y \in \bY$,
\begin{align*} 
  \delta^*(y) = \begin{cases}
      1, \;&\text{ if }  \;T(y) > \kappa \\
      \gamma(y), \;&\text{ if }  \;T(y) = \kappa \\
      0, \;&\text{ if }  \;T(y) < \kappa
  \end{cases}.
 \end{align*}
In general, $\gamma(\cdot)$ depends on $\Lambda_1$, so Theorem \ref{theorem:simple_null_structrual} does not immediately imply the existence of UMP tests. However,
if we assume that for any probability measure $\nu$ on $\Theta_1$ and $c \geq 0$, 
\begin{align*}
\mu\left(\left\{y \in \bY: \int_{\Theta_1} {f_{\theta}(y)} \nu(d\theta) = c f_{\theta_0}(y) \right\}\right) = 0,
\end{align*}   
then Theorem \ref{theorem:simple_null_structrual} establishes that there exists a UMP test of the form: $\delta^*(y) = \mathbbm{1}\{T(y) \geq \kappa\}$ for $y \in \bY$, where $\kappa$ is selected so that $p(\theta_0;\delta^*) = \alpha$.

\section{Composite Null and  Alternative: Integrated False-Alarm  Control}\label{sec:composite_null_integrated}

In this section, we consider the case where both $\Theta_0$ and $\Theta_1$ are composite, and aim to test:
\begin{align}
  \label{def:composite_null}
{\mathcal{H}}_0: \theta \in \Theta_0, \text{ vs }\, {\mathcal{H}}_1: \theta \in \Theta_1 := \Theta \setminus \Theta_0.
\end{align}
Let $\Lambda_0$ and $\Lambda_1$ be two probability measures on $\Theta_0$ and $\Theta_1$ respectively, $\alpha \in (0,1)$  a user-specified level, and $g:[0,1] \to \bR$  a measurable function. Our goal is to solve the following problem:
\begin{align} \label{def:two_sided_prob_composite}
\begin{split}
    &\max_{\delta \in \Delta} \int_{\Theta_1} g(p(\theta;\delta)) \Lambda_1(d\theta), \\ 
&\text{subject to } \int_{\Theta_0} p(\theta;\delta) \Lambda_0(d\theta) \leq \alpha.
\end{split}
\end{align}
This generalizes the case in \eqref{def:two_sided_prob}. Define the following probability density function w.r.t.~$\mu$: for $y \in \bY$,
\begin{align*}
    \tilde{f}_{\Theta_0}(y) := \int_{\Theta_0} f_{\theta}(y) \Lambda_0(d\theta).
\end{align*}
We have the following structural result, which parallels Theorem \ref{theorem:simple_null_structrual}, with $f_{\theta_0}$ replaced by $\tilde{f}_{\Theta_0}$.


\begin{theorem}  
\label{thrm:composite_null_avg}
    Suppose that Assumptions \ref{assumption:continuity} and \ref{assumption_g_differentiable} hold, and that 
$\bar{\delta}_0$ is not an optimal solution to problem \eqref{def:two_sided_prob_composite}.
\begin{enumerate}[label=\roman*)]
    \item There exist  a rule $\delta^* \in \Delta$, constant $\kappa \in \bR$, and a measurable function $\gamma: \bY \to [0,1]$ such that  $\delta^*$ solves the optimization problem in \eqref{def:two_sided_prob_composite}, and for each $y \in \bY$,
 \begin{align*} 
  \delta^*(y) = \begin{cases}
      1, \;&\text{ if }  \;{H}(y) > \kappa \tilde{f}_{\Theta_0}(y) \\
      \gamma(y), \;&\text{ if }  \;{H}(y) = \kappa \tilde{f}_{\Theta_0}(y) \\
      0, \;&\text{ if }  \;{H}(y) < \kappa \tilde{f}_{\Theta_0}(y)
  \end{cases},
 \end{align*}
where recall that $H(\cdot)$ is defined in \eqref{def:h_function}.


\item For any optimal rule $\delta^* \in \Delta$, it must take the above form  for $\mu$-a.e.~$y$.

\item If, in addition to Assumptions \ref{assumption:continuity} and \ref{assumption_g_differentiable}, Assumption \ref{assumption_g_increasing} holds, then $\bar{\delta}_0$ is not an optimal solution to problem \eqref{def:two_sided_prob_composite}, and for any optimal rule $\delta^*\in \Delta$, we have $\int_{\Theta_0} p(\theta; \delta^*) \Lambda_0(d\theta) = \alpha$.
\end{enumerate}
\end{theorem}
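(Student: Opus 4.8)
The plan is to reduce problem \eqref{def:two_sided_prob_composite} to the simple-null problem \eqref{def:two_sided_prob}, which is already settled by Theorem \ref{theorem:simple_null_structrual}, by replacing the single null density $f_{\theta_0}$ with the integrated density $\tilde{f}_{\Theta_0}$. The one genuinely new observation is that, by Tonelli's theorem (all integrands being nonnegative), the average false-alarm functional collapses to a false-alarm against a single density:
\begin{align*}
\int_{\Theta_0} p(\theta;\delta)\,\Lambda_0(d\theta)
&= \int_{\Theta_0}\int_{\bY} \delta(y) f_{\theta}(y)\,\mu(dy)\,\Lambda_0(d\theta) \\
&= \int_{\bY} \delta(y)\,\tilde{f}_{\Theta_0}(y)\,\mu(dy).
\end{align*}
Since $\Lambda_0$ is a probability measure and each $f_{\theta}$ integrates to one, $\tilde{f}_{\Theta_0}$ is itself a $\mu$-density, so \eqref{def:two_sided_prob_composite} is formally an instance of \eqref{def:two_sided_prob} with $f_{\theta_0}$ replaced by $\tilde{f}_{\Theta_0}$. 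The remaining argument then follows that of Theorem \ref{theorem:simple_null_structrual}; I sketch it below.

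First I would establish existence of an optimal rule and non-degeneracy of its power function. The objective $p \mapsto \int_{\Theta_1} g(p(\theta))\,\Lambda_1(d\theta)$ is $\|\cdot\|_{\infty}$-continuous on $\cP$ by Assumption \ref{assumption_g_differentiable}, and the feasible set is a nonempty closed subset of the compact set $\cP$ (Lemma \ref{lemma:structural}), so an optimal $\delta^*$ exists. Writing $p^* := p(\cdot;\delta^*)$, I would show $p^*(\theta) \in (0,1)$ for every $\theta$, so that $g'(p^*(\theta))$ and hence $H$ are well defined: $p^* \not\equiv 0$ because $\bar{\delta}_0$ is assumed suboptimal, $p^* \not\equiv 1$ because $\delta^* \equiv 1$ violates the constraint, and $p^*$ cannot take both values $0$ and $1$ since, by the common-support clause of Assumption \ref{assumption:continuity}, no rule can equal $0$ $\mu_{\theta}$-a.e.\ and $1$ $\mu_{\theta'}$-a.e.\ for mutually absolutely continuous $\mu_{\theta},\mu_{\theta'}$. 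Part 4) of Lemma \ref{lemma:structural} then excludes the endpoint values altogether, giving $0 < \inf_{\Theta} p^* \le \sup_{\Theta} p^* < 1$.

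Next, the linearization, which yields parts i) and ii). For feasible $\delta$, the combination $\delta_t := (1-t)\delta^* + t\delta$ is feasible (the constraint is linear in $\delta$) with power function $(1-t)p^* + t\,p(\cdot;\delta)$; optimality of $\delta^*$ forces the right derivative at $t=0$ of $\int_{\Theta_1} g((1-t)p^* + t\,p(\cdot;\delta))\,\Lambda_1$ to be nonpositive. Differentiating under the integral (legitimate since $p^*$ is bounded away from $\{0,1\}$ and $g'$ is continuous there) gives $\int_{\Theta_1} g'(p^*(\theta))\,p(\theta;\delta)\,\Lambda_1(d\theta) \le \int_{\Theta_1} g'(p^*(\theta))\,p^*(\theta)\,\Lambda_1(d\theta)$, whose left-hand side equals $\int_{\bY}\delta(y) H(y)\,\mu(dy)$ by Tonelli, with $H$ as in \eqref{def:h_function}. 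Thus $\delta^*$ maximizes the linear functional $\delta \mapsto \int_{\bY}\delta H\,d\mu$ subject to $\int_{\bY}\delta\,\tilde{f}_{\Theta_0}\,d\mu \le \alpha$, a simple-versus-simple Neyman--Pearson problem. The Neyman--Pearson lemma (equivalently Lemma \ref{lemma:Bayes_rule} applied to the signed measure $g'(p^*)\Lambda_1 - \kappa\Lambda_0$) supplies a multiplier $\kappa \ge 0$ for which the pointwise maximization of $\delta(y)\big(H(y) - \kappa\tilde{f}_{\Theta_0}(y)\big)$ over $\delta(y)\in[0,1]$ forces exactly the claimed threshold form, with randomization confined to $\{H = \kappa\tilde{f}_{\Theta_0}\}$; note this works for sign-indefinite $g'$ after rearranging the negative-$g'$ contribution. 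As the argument applies to every optimal rule, part ii) holds $\mu$-a.e., and taking the threshold rule as representative gives part i).

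For part iii), assume $g'>0$, so $H > 0$ $\mu$-a.e. Then $\bar{\delta}_0$ is suboptimal, since the constant rule $\delta_{\epsilon}\equiv\epsilon$ is feasible for $\epsilon<\alpha$ and attains $g(\epsilon) > g(0)$; and the constraint must bind, for if $\int\delta^*\tilde{f}_{\Theta_0}\,d\mu < \alpha$ then $\delta^* < 1$ on a set of positive $\mu$-measure (otherwise the integral equals $1 > \alpha$), and nudging $\delta^*$ upward there preserves feasibility while strictly increasing $p(\theta;\cdot)$ for all $\theta\in\Theta_1$ (common support) and hence, by $g'>0$, strictly increasing the objective, a contradiction. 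The main obstacle is not the Neyman--Pearson thresholding but the linearization: one must secure $0 < \inf_{\Theta} p^* \le \sup_{\Theta} p^* < 1$ (via the common-support condition and Lemma \ref{lemma:structural}) before differentiating the nonlinear objective and passing the derivative through the $\Lambda_1$-integral. I expect that non-degeneracy step to demand the most care.
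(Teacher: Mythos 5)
Your proposal takes essentially the same route as the paper's proof: the Fubini/Tonelli collapse of the averaged false-alarm constraint into a constraint against the single density $\tilde{f}_{\Theta_0}$, followed by a rerun of the proof of Theorem \ref{theorem:simple_null_structrual} with $f_{\theta_0}$ replaced by $\tilde{f}_{\Theta_0}$ (existence via compactness of $\cP$, non-degeneracy via the common-support clause and Lemma \ref{lemma:structural}, linearization, then the generalized Neyman--Pearson lemma), which is exactly what the paper does. One small correction: under Assumption \ref{assumption_g_differentiable} alone the multiplier is only guaranteed to lie in $\bR$ (as the theorem states), not $\kappa \geq 0$; the paper obtains it from the equality-constrained version of the generalized NP lemma, where the sign is unrestricted.
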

\begin{proof}
    See Appendix \ref{app:proof_composite_null}.
\end{proof}

We now simplify the above results for the single-parameter exponential family distributions defined in \eqref{def:exponential_family} for ``two-sided'' hypotheses with a composite null.

\begin{theorem}\label{theorem:composiste_null_exponential}
Consider the single-parameter exponential family distributions in \eqref{def:exponential_family}. Suppose that Assumptions   \ref{assumption_g_differentiable} and \ref{assumption_g_increasing} hold, and that $\Theta_0 = [a,b]$ and $\Theta = [-K,K]$ for some $K > \max\{|a|, |b|\}$.  There exists an optimal rule $\delta^*$ for the problem in \eqref{def:two_sided_prob_composite}  such that $\int_{a}^{b} p(\theta; \delta^*) \Lambda_0(d\theta) = \alpha$, and for some constants $-\infty \leq \ell < u \leq \infty$ and a measurable function $\gamma:\bY \to [0,1]$, $\delta^*$ is given by \eqref{def:optimal_rule_Ty}.
\end{theorem}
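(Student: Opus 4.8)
The plan is to invoke Theorem~\ref{thrm:composite_null_avg} to reduce the problem to a single scalar inequality, and then exploit the total positivity of the exponential kernel to show that the acceptance region is an interval in the sufficient statistic $T$. By Corollary~\ref{cor:exponential_family}, Assumption~\ref{assumption:continuity} holds for the family \eqref{def:exponential_family}; together with Assumptions~\ref{assumption_g_differentiable} and \ref{assumption_g_increasing}, part~iii) of Theorem~\ref{thrm:composite_null_avg} gives that $\bar\delta_0$ is not optimal and that $\int_a^b p(\theta;\delta^*)\Lambda_0(d\theta)=\alpha$, after which part~i) supplies an optimal rule $\delta^*$ rejecting exactly when $H(y)>\kappa\tilde f_{\Theta_0}(y)$ for some $\kappa\in\bR$. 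Since $\bar\delta_0$ is not optimal, $p(\theta;\delta^*)\in(0,1)$ for every $\theta$ (Lemma~\ref{lemma:structural}, part~4), so the weights $g'(p(\theta;\delta^*))$ are strictly positive.

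Substituting $f_\theta(y)=c(\theta)h(y)e^{\theta T(y)}$ into $H$ (defined in \eqref{def:h_function}) and into $\tilde f_{\Theta_0}$ and cancelling the common factor $h(y)>0$, the rejection condition $H(y)>\kappa\tilde f_{\Theta_0}(y)$ becomes $\psi(T(y))>0$, where, writing $t=T(y)$,
\[
\psi(t):=\int_{\Theta_1} g'(p(\theta;\delta^*))c(\theta)e^{\theta t}\,\Lambda_1(d\theta)-\kappa\int_{\Theta_0}c(\theta)e^{\theta t}\,\Lambda_0(d\theta).
\]
A short argument rules out $\kappa\le 0$ (otherwise $\psi>0$ identically, forcing the infeasible reject-everywhere rule), so $\kappa>0$. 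Hence $\psi(t)=\int_\Theta e^{\theta t}\,\nu(d\theta)$ for the signed measure $\nu$ equal to $g'(p(\theta;\delta^*))c(\theta)\Lambda_1(d\theta)>0$ on $\Theta_1=[-K,a)\cup(b,K]$ and to $-\kappa c(\theta)\Lambda_0(d\theta)<0$ on $\Theta_0=[a,b]$; reading in increasing $\theta$, the sign of $\nu$ follows the pattern $+,-,+$, i.e., $\nu$ has at most two sign changes.

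The heart of the proof is to show that the acceptance region $\{t:\psi(t)<0\}$ is an interval (possibly empty or semi-infinite). Because the kernel $(\theta,t)\mapsto e^{\theta t}$ is strictly totally positive, the variation-diminishing property (equivalently, the generalized Descartes rule of signs for exponential mixtures) bounds the number of real zeros of $\psi$, counted with multiplicity, by the number of sign changes of $\nu$, hence by two. I would then split into cases according to where $\Lambda_1$ places mass, using $K>\max\{|a|,|b|\}$ to ensure that both $[-K,a)$ and $(b,K]$ are available. If $\Lambda_1$ charges only one side of $[a,b]$, then $\nu$ has a single sign change, so $\psi$ does too and $\{\psi<0\}$ is automatically a half-line. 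If $\Lambda_1$ charges both sides, then the right edge of $\mathrm{supp}\,\nu$ exceeds $b\ge\sup\mathrm{supp}\,\Lambda_0$ and its left edge lies below $a\le\inf\mathrm{supp}\,\Lambda_0$; a Laplace-type asymptotic comparison of the two integrals then shows $\psi(t)\to+\infty$ as $t\to\pm\infty$, and since feasibility forces $\psi$ to be negative somewhere, the sign pattern is pinned to exactly $+,-,+$, so $\{\psi<0\}=(\ell,u)$ for some $\ell<u$.

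Translating back, $\delta^*$ rejects when $T(y)<\ell$ or $T(y)>u$, accepts when $\ell<T(y)<u$, and randomizes on $T(y)\in\{\ell,u\}$, which is precisely the form \eqref{def:optimal_rule_Ty}; the tightness $\int_a^b p(\theta;\delta^*)\Lambda_0(d\theta)=\alpha$ has already been recorded. The main obstacle is this middle step: establishing that $\{\psi<0\}$ is a genuine interval rather than a two-sided set. It requires both the sign-change/zero-counting bound from total positivity (to forbid three or more crossings) and the endpoint/support-disjointness argument (to forbid the pattern $-,+,-$, in which $\psi$ would be negative at both ends). The crucial structural fact is $\mathrm{supp}\,\Lambda_1\cap(a,b)=\emptyset$: $\Lambda_1$ cannot be simultaneously ``left of'' and ``right of'' $\mathrm{supp}\,\Lambda_0$, so $\psi$ cannot tend to $-\infty$ at both ends. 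Care is also needed to exclude interior tangential zeros of the real-analytic function $\psi$ inside $(\ell,u)$, which the zero-counting (rather than merely sign-change) form of the total-positivity estimate handles.
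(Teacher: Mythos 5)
Your proposal is correct, but it takes a genuinely different route through the key step. Both you and the paper begin identically: Corollary~\ref{cor:exponential_family} supplies Assumption~\ref{assumption:continuity}, Theorem~\ref{thrm:composite_null_avg} (parts~i and~iii) gives the threshold rule $H(y) \gtrless \kappa \tilde f_{\Theta_0}(y)$ together with tightness of the constraint, and the problem reduces to showing that the acceptance region is an interval in $T(y)$. The paper handles this by passing to the \emph{log-ratio} $\psi(x)=\log \Exp[e^{Ux}]-\log \Exp[e^{Vx}]-\tilde\kappa$, where $U$ is a reweighted version of $\Lambda_1$ supported on $[-K,a)\cup(b,K]$ and $V$ a reweighted version of $\Lambda_0$ on $[a,b]$; it then uses exponential tilting (Lemma~\ref{lemma:change_of_measure}) to write $\psi'$ and $\psi''$ as differences of tilted means and variances, the support-separation variance comparison (Lemma~\ref{lemma:compare_variance}) to show every critical point of $\psi$ is a strict local minimum, and the analytic-roots lemma (Lemma~\ref{lemma:roots}) to conclude $\psi'$ has at most one root. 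You instead work with the \emph{difference} $\psi(t)=\int_\Theta e^{\theta t}\,\nu(d\theta)$ for a signed measure $\nu$ with sign pattern $+,-,+$, and invoke the variation-diminishing property of the strictly totally positive exponential kernel (generalized Descartes rule) to bound the number of zeros, with multiplicity, by two, then exclude the forbidden $-,+,-$ sign pattern via Laplace endpoint asymptotics driven by the support separation $\operatorname{supp}\Lambda_1\cap(a,b)=\emptyset$. Both arguments are sound; the trade-off is that yours outsources the hard analytic work to a classical but nontrivial total-positivity theorem that would need a citation or a proof (e.g., the Rolle-plus-tilting induction), whereas the paper's argument is self-contained using three elementary lemmas. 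Two details in your write-up are actually improvements on the paper: you explicitly rule out $\kappa\le 0$ before taking the sign decomposition (the paper tacitly takes $\log\kappa$), and your case split on whether $\Lambda_1$ charges one or both sides of $[a,b]$ is more careful than the paper's blanket claim that $\psi(T(y))\to\infty$ as $|T(y)|\to\infty$, which fails in the one-sided case even though the conclusion survives because \eqref{def:optimal_rule_Ty} permits $\ell=-\infty$ or $u=\infty$.
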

\begin{proof}
    See Appendix \ref{app:proof_composite_null}.
\end{proof}

\begin{remark}\label{remark:composite_avg}
    The proof strategy of Theorem \ref{theorem:composiste_null_exponential} is as follows. Note that $\log({H}(y)/ \tilde{f}_{\Theta_0}(y))$ is a function of $T(y)$,   denoted by $\psi(T(y))$. Using a change-of-measure argument (see Lemma \ref{lemma:change_of_measure}), we show that if $\psi'(x) = 0$  for some $x \in \bR$, then $\psi''(x)$ must be positive. By Lemma \ref{lemma:roots}, this implies that $\psi'$ has \emph{at most} one root. Thus,  $\psi$ is either monotonic or has a single inflection point.
\end{remark}

\subsection{Numerical Example}

We use a numerical example to illustrate the results in Theorem \ref{theorem:composiste_null_exponential}.   Let $\mu$ be the Lebesgue measure and $f_{\theta}$ be the density of the normal distribution with mean $\theta$ and variance $1$. Let $\Theta_0 = [-0.2,0.4]$ and $\Theta = [-1,1]$. Consider the following optimization problem:
\begin{align*}
    \max_{\delta \in \Delta} \int_{-1}^{-0.2} \sqrt{p(\theta;\delta)} d\theta + \int_{0.4}^{1} \sqrt{p(\theta;\delta)} d\theta,
\end{align*}
subject to the constraint that  $\int_{-0.2}^{0.4} p(\theta;\delta) d\theta \leq 0.1$. By Theorem \ref{theorem:composiste_null_exponential}, there exists an optimal rule of the following form: for each $y \in \bR$,
\begin{align*}
    \tilde{\delta}_{\ell}(y) = \mathbbm{1}\left\{y \leq \ell \; \text{ or }\; y \geq u_{\ell} \right\},
\end{align*}
where $\ell$ and $u_{\ell}$ satisfy the following constraint 
$\int_{-0.2}^{0.4} {p(\theta;\tilde{\delta}_{\ell})} d\theta = 0.1$. Then, a grid search over $\ell$ would yield an optimal solution. 
By numerical integration, the optimal parameters are obtained as $\ell^* = -1.1673$ and $u_{\ell^*}= 1.6713$.

\section{Composite Null and Alternative: supremum False-Alarm Control}\label{sec:composite_null_supremum}

In this section, we continue considering the hypotheses in \eqref{def:composite_null}. However, rather than controlling the integrated false-alarm risk, we focus on the supremum risk. Specifically,  the objective is to solve the following optimization problem:
\begin{align} \label{def:composite_null_sup}
\begin{split}
    &\max_{\delta \in \Delta} \int_{\Theta_1} g(p(\theta;\delta)) \Lambda_1(d\theta), \\ 
&\text{subject to } \sup_{\theta \in\Theta_0} p(\theta;\delta)   \leq \alpha,
\end{split}
\end{align}
where  $\Lambda_1$ is a probability measure on $\Theta_1$, $\alpha \in (0,1)$  a user-specified level, and $g:[0,1] \to \bR$  a measurable function. In this setup, a UMP test typically does not exist.

\begin{theorem}\label{theorem:composiste_supremum}
  Suppose that Assumptions \ref{assumption:continuity}, \ref{assumption_g_differentiable}, and \ref{assumption_g_increasing} hold, and that $\Theta_0$ is closed. There exists an optimal rule $\delta^* \in \Delta$ for problem \eqref{def:composite_null_sup} such that 
  $\sup_{\theta \in \Theta_0} p(\theta; \delta^*) = \alpha$
  and that for  some constant $\kappa \geq 0$, a measurable function $\gamma:\bY \to [0,1]$ and a probability measure $\widetilde{\Lambda}_0$ on $\Theta_0$, we have
 \begin{align*}
  \delta^*(y) = \begin{cases}
      1, \;&\text{ if }  \;H(y) > \kappa {\widetilde{f}_{\widetilde{\Lambda}_0}(y)} \\
      \gamma(y), \;&\text{ if }  \;H(y) = \kappa 
 {\widetilde{f}_{\widetilde{\Lambda}_0}(y)}\\
      0, \;&\text{ if }  \;H(y) < \kappa {\widetilde{f}_{\widetilde{\Lambda}_0}(y)}
  \end{cases},
 \end{align*}
  for each $y \in \bY$, 
 where $H(\cdot)$ is defined in \eqref{def:h_function} and 
 $$
 \widetilde{f}_{\widetilde{\Lambda}_0}(y) := \int f_{\theta}(y) \widetilde{\Lambda}_0(d\theta).
 $$
\end{theorem}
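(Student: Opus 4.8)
The plan is to reduce the nonlinear, supremum-constrained problem \eqref{def:composite_null_sup} to a \emph{linear} problem with a composite-null false-alarm constraint, and then to extract the least-favorable distribution $\widetilde{\Lambda}_0$ as a Lagrange multiplier at a saddle point. First I would settle existence and basic structure. Since $\Theta_0$ is closed, hence compact as a closed subset of the compact $\Theta$, the map $p \mapsto \sup_{\theta \in \Theta_0} p(\theta)$ is $1$-Lipschitz in $\|\cdot\|_{\infty}$, so the feasible set $\{p \in \cP : \sup_{\theta \in \Theta_0} p(\theta) \leq \alpha\}$ is a closed, hence compact, subset of $\cP$ by Lemma \ref{lemma:structural}. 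The objective $p \mapsto \int_{\Theta_1} g(p(\theta))\Lambda_1(d\theta)$ is continuous there (dominated convergence, $g$ continuous on $[0,1]$), so an optimal $\delta^*$ with $p^* := p(\cdot\,;\delta^*)$ exists. Under Assumption \ref{assumption_g_increasing}, a standard improvement argument (perturbing $\bar{\delta}_0$ toward a simple Neyman--Pearson test against one alternative raises the strictly increasing $g$) shows $\bar{\delta}_0$ is not optimal; combined with part 4) of Lemma \ref{lemma:structural} and feasibility ($p^* \leq \alpha < 1$ on $\Theta_0$), this gives $0 < p^*(\theta) < 1$ for all $\theta \in \Theta$, so $g'(p^*(\theta))$ and hence $H$ in \eqref{def:h_function} are well defined. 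A further upward perturbation of $\delta^*$ shows the constraint is active, $\sup_{\theta \in \Theta_0} p^*(\theta) = \alpha$.

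Next I would linearize. The feasible set is convex (a supremum of affine functionals bounded by $\alpha$), so for any feasible $\delta$ the path $\delta_t = (1-t)\delta^* + t\delta$ is feasible, and optimality forces the one-sided derivative at $t = 0^+$ of $t \mapsto \int_{\Theta_1} g(p(\theta;\delta_t))\Lambda_1(d\theta)$ to be nonpositive. Differentiating under the integral (justified since $p^*$ is bounded away from $\{0,1\}$ and $g \in C^1(0,1)$) and applying Fubini yields $\int_{\bY} \delta(y) H(y)\,\mu(dy) \leq \int_{\bY} \delta^*(y) H(y)\,\mu(dy)$. Hence $\delta^*$ maximizes the \emph{linear} functional $\delta \mapsto \int_{\bY} \delta H\,d\mu$ over the supremum-feasible set, and it suffices to exhibit the claimed threshold structure for this linear problem.

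The heart of the argument is a minimax/duality step producing $\widetilde{\Lambda}_0$. I would write the constraint as the cone constraint $p(\cdot\,;\delta)|_{\Theta_0} \leq \alpha$ in $C(\Theta_0)$ and form the Lagrangian, for $\nu \in \cM_{+}(\Theta_0)$,
\[
L(\delta,\nu) = \int_{\bY} \delta H\, d\mu - \int_{\Theta_0}\big(p(\theta;\delta) - \alpha\big)\,\nu(d\theta) = \int_{\bY} \delta(y)\big(H(y) - \widetilde{f}_{\nu}(y)\big)\mu(dy) + \nu(\Theta_0)\,\alpha,
\]
where $\widetilde{f}_{\nu}(y) := \int_{\Theta_0} f_{\theta}(y)\,\nu(d\theta)$ by Fubini. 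Because $\bar{\delta}_0$ is strictly feasible ($p(\cdot\,;\bar{\delta}_0) \equiv 0 < \alpha$), Slater's condition holds, so strong duality provides a bounded optimal multiplier $\widetilde{\nu} \in \cM_{+}(\Theta_0)$ and a saddle point $L(\delta,\widetilde{\nu}) \leq L(\delta^*,\widetilde{\nu}) \leq L(\delta^*,\nu)$. The \textbf{main obstacle} is precisely the existence of $\widetilde{\nu}$: the multiplier space $\cM_{+}(\Theta_0)$ is not compact and the constraint is semi-infinite, so one must first use Slater's condition to bound the multiplier mass and then apply Sion's theorem on a weak-$*$ compact ball, invoking compactness of $\cM_1(\Theta_0)$ (as $\Theta_0$ is compact) and of $\cP$.

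Finally I would read off the conclusion. Writing $\kappa := \widetilde{\nu}(\Theta_0)$ and $\widetilde{\Lambda}_0 := \widetilde{\nu}/\kappa$ (a probability measure with $\widetilde{f}_{\widetilde{\Lambda}_0} = \widetilde{f}_{\widetilde{\nu}}/\kappa$), the left saddle inequality says $\delta^*$ maximizes $L(\cdot\,,\widetilde{\nu})$; maximizing pointwise in $\delta(y) \in [0,1]$ over the integrand $H(y) - \kappa\,\widetilde{f}_{\widetilde{\Lambda}_0}(y)$ forces $\delta^*(y) = 1$ where $H(y) > \kappa\,\widetilde{f}_{\widetilde{\Lambda}_0}(y)$, $\delta^*(y) = 0$ where the inequality reverses, and $\delta^*(y) = \gamma(y) \in [0,1]$ on the equality set, which is exactly the stated form with $\kappa \geq 0$. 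Complementary slackness $\int_{\Theta_0}(p^*(\theta) - \alpha)\,\widetilde{\nu}(d\theta) = 0$ together with $p^* \leq \alpha$ shows $\widetilde{\nu}$, hence $\widetilde{\Lambda}_0$, is supported on the least-favorable set $\{\theta \in \Theta_0 : p^*(\theta) = \alpha\}$, which is nonempty by the active-constraint step (and forces $\kappa > 0$). This recovers both $\sup_{\theta \in \Theta_0} p^*(\theta) = \alpha$ and the threshold rule, paralleling the least-favorable-distribution characterizations for composite hypotheses in \cite{cvitanic2001generalized,rudloff2008testing}.
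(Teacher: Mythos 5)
Your proposal is correct in substance, and it departs from the paper's proof at the decisive step. The two arguments coincide up through the linearization: both establish existence of an optimal $\delta^*$, activeness of the constraint, the bounds $0<\inf_\Theta p^*\le\sup_\Theta p^*<1$, and then show via the convex-combination/derivative argument that $\delta^*$ maximizes the linear functional $\delta\mapsto\int_{\bY}\delta H\,d\mu$ over the supremum-feasible set. At that point the paper simply \emph{cites} the classical theory of least favorable distributions --- Theorem 3.8.1 of \cite{romano2005testing} together with the existence result of \cite{lehmann1952existence}, which applies because $\Theta_0$ is compact --- to obtain $\widetilde{\Lambda}_0$ such that $\delta^*$ solves the problem with the single averaged constraint $\int\delta\,\widetilde{f}_{\widetilde{\Lambda}_0}\,d\mu\le\alpha$, and then concludes by the Neyman--Pearson lemma. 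You instead \emph{construct} $\widetilde{\Lambda}_0$ as a Lagrange multiplier: Slater's condition (via $\bar{\delta}_0$) bounds the dual mass, Sion's minimax theorem on a weak-$*$ compact ball of measures (Prohorov, $\Theta_0$ compact, and $p(\cdot\,;\delta)\in C(\Theta_0)$ for weak-$*$ continuity) yields a saddle point, and pointwise maximization of the Lagrangian gives the threshold form --- which is exactly the content of the NP-lemma step in the paper. Your route is longer but self-contained within convex duality (it effectively re-proves the least-favorable-distribution existence in this setting), and it yields extra information the paper's citation does not display: complementary slackness, hence that $\widetilde{\Lambda}_0$ is supported on $\{\theta\in\Theta_0: p^*(\theta)=\alpha\}$. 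Two small points to tighten: the existence of the multiplier is, as you say, the real work, and should either be carried out (your penalty/Sion sketch does go through, with the mass bound $\nu(\Theta_0)\le\alpha^{-1}\int H\,d\mu$ coming from evaluating the dual objective at $\bar{\delta}_0$) or replaced by a citation to a generalized Kuhn--Tucker theorem in $C(\Theta_0)$, whose positive cone has nonempty interior; and complementary slackness alone does not force $\kappa>0$ --- rather, if $\widetilde{\nu}=0$ then maximizing $L(\cdot,0)$ would give $\delta^*=1$ $\mu$-a.e.\ on $\{H>0\}$, hence $p^*\equiv 1$ by mutual absolute continuity, contradicting feasibility.
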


\begin{proof}
    See Appendix \ref{app:composite_supremum}.
\end{proof}

\begin{remark}
From the proof of Theorem \ref{theorem:composiste_supremum}, $\widetilde{\Lambda}_0$ is the least-favorable distribution (see \cite[Theorem 3.8.1]{romano2005testing})  for the following hypothesis testing problem under the supremum false-alarm constraint: the null hypothesis $\{f_{\theta}: \theta \in \Theta_0\}$ versus the simple alternative (normalized) $H(\cdot)$.
\end{remark}

We now focus on two-sided hypotheses and exponential family distributions to simplify the preceding results.

\begin{theorem}\label{theorem:composiste_exponential_supremum}
Consider the single-parameter exponential family distributions in \eqref{def:exponential_family}, and assume that $\Theta_0 = [a,b]$ and $\Theta = [-K,K]$ for some $K > \max\{|a|,|b|\}$. Suppose that Assumptions \ref{assumption_g_differentiable} and \ref{assumption_g_increasing} hold, and that $T(y)$ takes more than two values, that is, $\mu(\{y \in \bY: T(y) \not\in \{c_1,c_2\}\}) > 0$ for any $c_1,c_2 \in \bR$.     Then, there exists an optimal rule $\delta^*$ for  problem  \eqref{def:composite_null_sup}  such that $\sup_{\theta \in [a,b]} p(\theta; \delta^*)  = \alpha = \max\{p(a;\delta), p(b;\delta)\}$, and that for some  $-\infty \leq \ell < u \leq \infty$ and measurable function $\gamma:\bY \to [0,1]$, $\delta^*$ is given by \eqref{def:optimal_rule_Ty}.
\end{theorem}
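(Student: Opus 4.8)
The strategy mirrors that of Theorem~\ref{theorem:composiste_null_exponential}, adapted to the supremum constraint. First I would invoke Theorem~\ref{theorem:composiste_supremum}: since the exponential family in \eqref{def:exponential_family} satisfies Assumption~\ref{assumption:continuity} by Corollary~\ref{cor:exponential_family}, $\Theta_0=[a,b]$ is closed, and Assumptions~\ref{assumption_g_differentiable}--\ref{assumption_g_increasing} hold, there is an optimal rule $\delta^*$ for \eqref{def:composite_null_sup} that thresholds $H(y)/\widetilde{f}_{\widetilde{\Lambda}_0}(y)$ at some $\kappa\ge 0$, where $\widetilde{\Lambda}_0$ is a least-favorable distribution on $[a,b]$, $H$ is given by \eqref{def:h_function}, and $\sup_{\theta\in[a,b]}p(\theta;\delta^*)=\alpha$; moreover $g'>0$ forces $p(\theta;\delta^*)\in(0,1)$ for all $\theta$, so $g'(p(\theta;\delta^*))>0$. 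Substituting \eqref{def:exponential_family} and cancelling $h(y)$, the decision statistic depends on $y$ only through $x:=T(y)$: writing $N(x)=\int_{\Theta_1} g'(p(\theta;\delta^*))c(\theta)e^{\theta x}\Lambda_1(d\theta)$ and $D(x)=\int_{\Theta_0} c(\theta)e^{\theta x}\widetilde{\Lambda}_0(d\theta)$, the rule rejects when $\psi(x):=\log N(x)-\log D(x)>\log\kappa$. It therefore suffices to show $\psi$ is either monotone or strictly U-shaped, so that $\{\psi>\log\kappa\}$ is the two-sided set $\{x<\ell\}\cup\{x>u\}$ and $\delta^*$ takes the form \eqref{def:optimal_rule_Ty}.

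The heart of the argument is a second-derivative sign computation, for which I would introduce the $x$-tilted probability measures $Q_1^{(x)}(d\theta)\propto g'(p(\theta;\delta^*))c(\theta)e^{\theta x}\Lambda_1(d\theta)$ on $\Theta_1$ and $Q_0^{(x)}(d\theta)\propto c(\theta)e^{\theta x}\widetilde{\Lambda}_0(d\theta)$ on $\Theta_0$. Differentiating gives $\psi'(x)=\mathrm{E}_{Q_1^{(x)}}[\theta]-\mathrm{E}_{Q_0^{(x)}}[\theta]$ and $\psi''(x)=\mathrm{Var}_{Q_1^{(x)}}(\theta)-\mathrm{Var}_{Q_0^{(x)}}(\theta)$. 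The key claim is that $\psi'(x)=0\Rightarrow\psi''(x)>0$. Fix such an $x$ and let $m$ be the common mean. Since $Q_0^{(x)}$ is supported on $[a,b]$, where $(\theta-a)(\theta-b)\le 0$, we get $\mathrm{E}_{Q_0^{(x)}}[\theta^2]\le(a+b)m-ab$; since $Q_1^{(x)}$ is supported on the \emph{open} set $\Theta_1=[-K,a)\cup(b,K]$, where $(\theta-a)(\theta-b)>0$, we get $\mathrm{E}_{Q_1^{(x)}}[\theta^2]>(a+b)m-ab$. Subtracting $m^2$ yields $\mathrm{Var}_{Q_1^{(x)}}(\theta)>\mathrm{Var}_{Q_0^{(x)}}(\theta)$, i.e.\ $\psi''(x)>0$. (This plays the role of the change-of-measure step, Lemma~\ref{lemma:change_of_measure}, in Remark~\ref{remark:composite_avg}.) By Lemma~\ref{lemma:roots}, $\psi'$ has at most one root, at which $\psi$ has a strict local minimum; hence $\psi$ is monotone or strictly U-shaped, and the rejection region is two-sided in $T$, establishing \eqref{def:optimal_rule_Ty}.

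It remains to identify the supremum false-alarm level. Because $\delta^*$ now rejects exactly on $\{T<\ell\}\cup\{T>u\}$, its false-alarm function is $p(\theta;\delta^*)=\bP_\theta(T<\ell)+\bP_\theta(T>u)$. For a one-parameter exponential family the acceptance-interval probability $\bP_\theta(\ell\le T\le u)$ is strictly unimodal in $\theta$ (the standard exponential-family fact underlying two-sided UMPU tests, cf.\ \cite{romano2005testing}), so $p(\theta;\delta^*)$ is strictly quasi-convex and attains its maximum over any subinterval at an endpoint; here the nondegeneracy hypothesis that $T$ takes more than two values guarantees $\ell<u$ is a genuine two-sided region and that the unimodality is strict. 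Consequently $\max_{\theta\in[a,b]}p(\theta;\delta^*)=\max\{p(a;\delta^*),p(b;\delta^*)\}$, which combined with $\sup_{\theta\in[a,b]}p(\theta;\delta^*)=\alpha$ from Theorem~\ref{theorem:composiste_supremum} gives $\alpha=\max\{p(a;\delta^*),p(b;\delta^*)\}$, as claimed.

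The main obstacle is the inequality $\psi''>0$ at critical points; everything else is either bookkeeping or a cited exponential-family property. The computation above shows this obstacle dissolves once one uses the quadratic $(\theta-a)(\theta-b)$ as a test function, exploiting that $\widetilde{\Lambda}_0$ is supported on the \emph{closed} interval $[a,b]$ while $\Lambda_1$ is supported on its \emph{open} complement; notably this works for \emph{any} least-favorable $\widetilde{\Lambda}_0$ and never requires knowing that $\widetilde{\Lambda}_0$ is two-point. I would only need to double-check two edge cases: that $N(x),D(x)>0$ so $\psi$ is well defined (immediate from $g'(p(\cdot))>0$ and $c(\cdot)>0$), and the degenerate thresholds $\kappa$ for which $\ell$ or $u$ is infinite or $\{\psi>\log\kappa\}$ is empty or all of $\bR$, which are covered by the convention $-\infty\le\ell<u\le\infty$ in \eqref{def:optimal_rule_Ty}.
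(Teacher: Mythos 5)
Your proposal is correct, and its skeleton coincides with the paper's: invoke Theorem \ref{theorem:composiste_supremum}, write the decision statistic as $\psi(T(y))=\log N(T(y))-\log D(T(y))$, prove that $\psi'(x)=0$ implies $\psi''(x)>0$ so that Lemma \ref{lemma:roots} makes $\psi$ monotone or U-shaped (hence the form \eqref{def:optimal_rule_Ty}), and then pin down the worst-case false alarm at the endpoints. The genuine differences are in how the two key inequalities are obtained. (i) For $\psi''>0$ at critical points, the paper routes through the exponential-tilting Lemma \ref{lemma:change_of_measure} and the Jensen-type comparison Lemma \ref{lemma:compare_variance}; you instead integrate the quadratic $(\theta-a)(\theta-b)$ against the tilted measures $Q_1^{(x)}$ and $Q_0^{(x)}$. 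This is valid and arguably cleaner: it needs no condition on the location of the common mean and uses only that $Q_1^{(x)}$ lives on the open set where the quadratic is strictly positive, so it covers degenerate $\widetilde{\Lambda}_0$ uniformly. (ii) For $\alpha=\max\{p(a;\delta^*),p(b;\delta^*)\}$, the paper gives a self-contained contradiction argument: an interior maximizer $\bar\theta\in(a,b)$ of $p(\cdot;\delta^*)$ would force, via tilted measures supported on the acceptance and rejection regions, equal means and $\Var(T(Y_{+}))\le\Var(T(Y_{-}))$, contradicting Lemma \ref{lemma:compare_variance}; this is exactly where the hypothesis that $T$ takes more than two values enters. You instead cite strict unimodality of $\theta\mapsto\bP_\theta(\ell\le T\le u)$ as a standard exponential-family fact. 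That fact is indeed classical (in effect Theorem 3.7.1(iii) of \cite{romano2005testing} applied to the complementary test, whose degeneracy exclusion matches the theorem's more-than-two-values hypothesis, and plain quasi-convexity already suffices for the endpoint maximum), so your route does close; but as written it delegates to a loosely referenced external result the one step that the paper actually proves. If you want it self-contained, your own quadratic device does the job: apply it in the $T$-variable, with the quadratic $(t-\ell)(t-u)$, to $\theta\mapsto\log\left(\int\delta^*h\,e^{\theta T}d\mu\right)-\log\left(\int(1-\delta^*)h\,e^{\theta T}d\mu\right)$, whose critical points are those of $p(\cdot;\delta^*)$; this reproduces the paper's contradiction argument.
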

\begin{proof}
    See Appendix \ref{app:composite_supremum}.
\end{proof}
 
\begin{remark}
In addition to the strategy discussed in Remark \ref{remark:composite_avg}, the key to the proof of Theorem \ref{theorem:composiste_exponential_supremum} is to show that, for a rule of the form \eqref{def:optimal_rule_Ty}, the worst-case false-alarm probability over 
$[a,b]$ is attained at either $a$ or $b$.
\end{remark}
\subsection{Numerical Example}

Let $\mu$ be the Lebesgue measure and $f_{\theta}$ be the density of the normal distribution with mean $\theta$ and variance $1$. Let $a = -0.2, b = 0.2$ and $K=1$. Consider the following optimization problem:
\begin{align*}
    \max_{\delta \in \Delta} \beta \int_{-1}^{-0.2} \sqrt{p(\theta;\delta)} d\theta + \int_{0.2}^{1} \sqrt{p(\theta;\delta)} d\theta,
\end{align*}
subject to the constraint that  $\sup_{\theta \in [-0.2,0.2]} p(\theta;\delta)  \leq \alpha$. By Theorem  \ref{theorem:composiste_exponential_supremum}, there exists an optimal rule of the following form: for each $y \in \bR$,
$\tilde{\delta}_{\ell}(y) = \mathbbm{1}\left\{y \leq \ell \; \text{ or }\; y \geq u_{\ell} \right\}$,
where $\ell$ and $u_{\ell}$ satisfy the following constraint 
$\max\{p(-0.2;\tilde{\delta}_{\ell}), p(0.2;\tilde{\delta}_{\ell})\}= \alpha$. Then, a grid search over $\ell$ would yield an optimal solution. 
By numerical integration, for $\alpha = 10\%$ and various $\beta$, we report in Table \ref{tab:comp_sup} the optimal pairs of $(\ell^*,u_{\ell^*})$, as well as the corresponding rejection probabilities at $-0.2$ and $0.2$.

\begin{table}[t!]
\centering
\caption{Optimal pairs of $(\ell^*,u_{\ell^*})$ and the rejection probabilities for the supremum false-alarm control example}
\begin{tabular}{|c|c|c|c|c|}
\hline
$\beta$ & $\ell^*$ & $u^*$ & $p(a;\tilde{\delta}_{\ell^*})$ & $p(b; \tilde{\delta}_{u_{\ell^*}})$ \\ \hline
1                    & -1.677   & 1.677 & $10\%$                   & $10\%$                        \\ \hline
5                    & -1.559   & 2.027 & $10\%$                    & $7.31\%$                     \\ \hline
10                   & -1.505   & 2.444 & $10\%$                    & $5.65\%$                     \\ \hline
\end{tabular}
\label{tab:comp_sup}
\end{table}

\section{Conclusion}

In this paper, we investigated the problem of composite binary hypothesis testing within the NP framework. We demonstrated that for any hypothesis testing problem involving rejection probabilities, it is sufficient to consider the family of ``generalized Bayes rules''. By analyzing several special cases, we provided more concrete results for optimal decision rules. Unlike existing studies, we allow the objective function to incorporate a nonlinear component in the detection probabilities. As a result, our findings are applicable to a wide range of detection problems, including those involving behavioral utility-based hypothesis testing.

\appendices

\section{Proofs of Lemma \ref{lemma:structural} and Lemma \ref{lemma:Bayes_rule}}\label{app:proof_cP}

\begin{proof}[Proof of Lemma \ref{lemma:structural}]
Let $\theta_n, n \geq 1$ and $\theta$ be in $\Theta$ such that $\lim_{n\to \infty} \theta_n = \theta$. For any procedure $\delta \in \Delta$, due to Assumption \ref{assumption:continuity}, we have
\begin{align*}
p(\theta_n; \delta) = &\int_{\bY} \delta(y) f_{\theta_n}(y) \mu(dy)\\
\to &\int_{\bY} \delta(y) f_{\theta}(y) \mu(dy) = p(\theta; \delta),
\end{align*}
and thus $p(\cdot;\delta) \in C(\Theta)$, which completes the proof of the first statement.

Further, let $p_1,p_2 \in \cP$. By definition, there exist $\delta_1,\delta_2 \in \Delta$ such that $p_k(\cdot) = p(\cdot;\delta_k)$ for $k = 1,2$. For any $\alpha \in (0,1)$,
\begin{align*}
    \alpha p_1(\cdot) + (1-\alpha) p_2(\cdot)
    &=     \alpha p(\cdot;\delta_1) + (1-\alpha) p(\cdot;\delta_2)\\
    &= p(\cdot; \alpha \delta_1 + (1-\alpha)\delta_2).
\end{align*}
That is, $\alpha p_1 + (1-\alpha) p_2$ is the power function of $\alpha \delta_1 + (1-\alpha) \delta_2$, and belongs to $\cP$. Thus, $\mathcal{P}$ is convex.

Next, we show that $\mathcal{P}$ is compact. Note that
\begin{align*}
 \sup_{\delta \in \Delta}  \left| p(\theta_n;\delta) - p(\theta;\delta)\right| \leq \int_{\bY} |f_{\theta_n}(y) - f_{\theta}(y)| \mu(dy).
\end{align*}
Thus, due to Assumption \ref{assumption:continuity}, $\cP \subset C(\Theta)$ is equicontinuous at any $\theta \in \Theta$. Further, by definition, $\sup_{\theta \in \Theta, p \in \cP} |p(\theta)| \leq 1$. Thus, by Arzela-Ascoli Theorem  \cite[Theorem 4.43]{folland1999real}, the closure of $\cP$ is compact in $C(\Theta)$. It remains to show that $\cP$ is closed.

Let $p_n, n \geq 1$ be elements in $\cP$ such that $\|p_n - p\|_{\infty} \to 0$ as $n \to \infty$ for some $p \in C(\Theta)$. By definition, there exist $\delta_n, n \geq 1$ in $\Delta$ such that
$p_n(\cdot) = p(\cdot; \delta_n)$ for $n \geq 1$. By \cite[Theorem A.5.1]{romano2005testing}, there exists a subsequence $\delta_{n_j}, j \geq 1$  and $\delta \in \Delta$, such that
$$
\lim_{j \to \infty} \int_{\bY} \delta_{n_j}(y) \eta(y) \mu(dy) = \int_{\bY}\delta(y) \eta(y) \mu(dy),
$$
for any $\mu$-integrable function $\eta$. As a result, for any $\theta \in \Theta$,
\begin{align*}
    p(\theta) &= 
\lim_{j \to \infty} \int_{\bY} \delta_{n_j}(y) f_{\theta}(y) \mu(dy)\\ 
&= \int_{\bY}\delta(y) f_{\theta}(y) \mu(dy) = p(\theta;\delta),
\end{align*}
which implies that $p \in \cP$. Thus $\cP$ is closed, and the proof of the second statement is complete.

Next, we consider the third statement. For any $\delta \in \Delta$, by the dominated convergence theorem \cite[Theorem 2.27]{folland1999real}, the function $p(\cdot;\delta) \in C(\Theta)$ is differentiable on $U$. Let $\tilde{f} \in C(\Theta)$ be any non-differentiable function on $U$. Then 
$p(\cdot;\delta) + \epsilon \tilde{f}$ is not differentiable on $U$ for any $\epsilon > 0$ and thus does not belong to $\cP$. This proves that $\cP$ has no interior point.

Finally, we consider the last statement. Since $p \in \cP$, it is the power function of some decision rule $\delta \in \Delta$, that is, 
$p(\theta) = \int \delta(y) f_{\theta}(y) \mu(dy)$ for each $\theta \in \Theta$.  
Recall that $\mu_{\theta}$ is the distribution of $Y$ when $\theta$ is the true parameter. Since $p(\theta') \in (0,1) $, we have
\begin{align*}
&\mu_{\theta'}(\{y \in \bY: \delta(y) > 0\}) > 0, \\
& \mu_{\theta'}(\{y \in \bY: \delta(y) < 1\}) > 0.
\end{align*}
Due to Assumption \ref{assumption:continuity},  we have that $\mu_{\theta}(\{y \in \bY: \delta(y) > 0\}) > 0$
and $\mu_{\theta}(\{y \in \bY: \delta(y) < 1\}) > 0$ 
for all $\theta \in \Theta$, which implies that $0< p(\theta) < 1$ for all $\theta \in \Theta$. Then the proof is complete since $p \in C(\Theta)$ and $\Theta$ is compact.
\end{proof}

\begin{proof}[Proof of Lemma \ref{lemma:Bayes_rule}]
    By assumption, $\nu = \nu^+ - \nu^{-}$, and $\nu^+$ and $\nu^-$ are positive, finite measures, and mutually singular. 
 Note that by the Fubini's Theorem,
 \begin{align*}
\int_{\Theta} p(\theta; \delta) \nu(d\theta) &= 
\int_{\Theta} \left( \int_\mathbbm{Y} \delta(y) f_{\theta}(y) \mu(dy)\right) 
\nu(d\theta) \\
&=\int_\mathbbm{Y} \delta(y) 
\left(\int_{\Theta} f_{\theta}(y) \nu^+(d\theta) 
- \int_{\Theta} f_{\theta}(y) \nu^-(d\theta)
\right)
\mu(dy).
 \end{align*}
 Then the result is immediate.
\end{proof}

\section{Proof of Theorem \ref{thrm:general_case}}\label{app:proof_general_case}

Recall that $\Theta$ is compact and $C(\Theta)$ is the space of continuous functions on $\Theta$ equipped the supremum norm $\|\cdot\|_{\infty}$, and that $\cM(\Theta)$ is the space of finite signed measures. For each $\nu \in \cM(\Theta)$, denote by $|\nu|$ the total variation of $\nu$. Note that the dual space (i.e. the space of bounded linear functionals) of $C(\Theta)$ is   $\cM(\Theta)$ \cite[Theorem 7.17]{folland1999real}.

We first define the support points of a subset   $\mathcal{F} \subset C(\Theta)$, and show that if $\mathcal{F}$ is non-empty, convex and closed with an empty interior, then all points of $\mathcal{F}$ are support points.

\begin{definition}
Let $\mathcal{F} \subset C(\Theta)$. We say $f \in \mathcal{F}$ is a support point of $\mathcal{F}$ if there exists a \emph{non-zero} finite signed measure $\nu \in \mathcal{M}(\Theta)$ such that
$$
\int_{\Theta} f(\theta) \nu(d\theta) \geq \int_{\Theta}  h(\theta) \nu(d\theta),\;\; \text{ for } h \in \mathcal{F}.
$$
\end{definition}

\begin{lemma}\label{lemma:C_support}
Let $\mathcal{F} \subset C(\Theta)$ be non-empty, closed and convex. Assume $\mathcal{F}$ has an empty interior. Then any point $f \in \mathcal{F}$ is a support point of $\mathcal{F}$.
\end{lemma}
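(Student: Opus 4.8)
The plan is to realize $f$ as the maximizer over $\mathcal F$ of a nonzero linear functional, exploiting that the empty interior of $\mathcal F$ forces $f$ onto the topological boundary (indeed $\mathcal F = \partial \mathcal F$, since $\mathcal F$ is closed with empty interior). Since the dual of $C(\Theta)$ is $\cM(\Theta)$, it is convenient to rephrase the goal through the tangent cone $T_f := \overline{\operatorname{cone}}(\mathcal F - f)$, the closed convex cone generated by $\mathcal F - f$. A routine polarity computation shows that $f$ is a support point of $\mathcal F$ in the sense of the Definition if and only if there is a nonzero $\nu \in \cM(\Theta)$ with $\int_\Theta \phi\,\nu(d\theta) \le 0$ for every $\phi \in T_f$, and such $\nu$ exists precisely when $T_f \neq C(\Theta)$ (the polar of a closed convex cone is trivial if and only if the cone is the whole space). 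Thus the statement is equivalent to showing that empty interior of $\mathcal F$ implies that $T_f$ is a proper subset of $C(\Theta)$ for every $f \in \mathcal F$.

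Concretely, I would argue by approximation and separation. Fixing $f \in \mathcal F$ and using that $f$ is not interior, choose $g_n \notin \mathcal F$ with $\|g_n - f\|_\infty \to 0$. Since $\mathcal F$ is closed and convex, the Hahn--Banach separation theorem yields, for each $n$, a measure $\nu_n \in \cM(\Theta)$ with $|\nu_n|(\Theta) = 1$ and $\int_\Theta h\,\nu_n(d\theta) \le \int_\Theta g_n\,\nu_n(d\theta)$ for all $h \in \mathcal F$. Because $C(\Theta)$ is separable, the unit ball of $\cM(\Theta)$ is weak-$*$ sequentially compact, so along a subsequence $\nu_n$ converges weak-$*$ to some $\nu$. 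For each fixed $h \in \mathcal F$, passing to the limit and using $\|g_n - f\|_\infty \to 0$ gives $\int_\Theta h\,\nu(d\theta) \le \int_\Theta f\,\nu(d\theta)$; hence $\nu$ supports $\mathcal F$ at $f$ as soon as $\nu \neq 0$.

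The main obstacle is exactly this last point: a weak-$*$ limit of unit-norm measures may vanish, and the supporting inequality is vacuous when $\nu = 0$. This is not a technicality to be dispatched by a sharper estimate, since the gap $\int_\Theta g_n\,\nu_n(d\theta) - \int_\Theta f\,\nu_n(d\theta)$ is bounded by $\|g_n - f\|_\infty \to 0$, so there is no uniform separation to inherit; indeed, in general Banach spaces such as the $\ell^1$ example cited in the Remark, the conclusion genuinely fails. I would therefore force nonvanishing using structure special to $C(\Theta)$, in particular the presence of an order unit: the constant function $\mathbf 1$ is an interior point of the positive cone, a feature absent in $L^1$-type spaces. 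The idea is to split into two cases according to whether $f + s\mathbf 1 \in \mathcal F$ for all small $s > 0$. When some $f + s_n\mathbf 1 \notin \mathcal F$ with $s_n \downarrow 0$, separating these points rather than arbitrary $g_n$ produces measures satisfying the sign constraint $\nu_n(\Theta) \ge 0$, which I would leverage to prevent the weak-$*$ limit from collapsing; in the complementary case the direction $\mathbf 1$ lies in $T_f$, which I would combine with the empty-interior hypothesis to conclude that $T_f$ is still proper. Turning these heuristics into a rigorous exclusion of $\nu = 0$ is the crux of the argument and the step I expect to require the most care.
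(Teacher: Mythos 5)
Your proposal is not a complete proof: as you yourself acknowledge, everything reduces to ruling out $\nu = 0$ for the weak-$*$ limit of the separating functionals, and that step is left as a heuristic. This is a genuine gap, not a deferrable technicality. It is instructive that the paper's own proof follows essentially your template and stumbles at exactly the same place: instead of separating $f$ from outside points, the paper first invokes the Bishop--Phelps theorem (for a closed convex set with empty interior, support points are dense in $\mathcal{F}$), picks support points $f_n \to f$ with unit-total-variation supporting measures $\nu_n$, extracts a weakly convergent subsequence via Prohorov's theorem applied to the positive and negative parts, and then asserts $|\nu_\infty| = 1$ ``by the Portmanteau theorem.'' That assertion is unjustified for \emph{signed} measures, because the positive and negative parts can cancel in the limit: for example, $\nu_n = \tfrac12\bigl(\delta_{1/n} - \delta_{1/(n+1)}\bigr)$ (point masses on $\Theta=[0,1]$) has $|\nu_n| = 1$ for every $n$, yet $\nu_n \to 0$ in the weak-$*$ topology, while its positive and negative parts each converge weakly to $\tfrac12\delta_0$. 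So your diagnosis of where the difficulty sits is exactly right; neither your sketch nor the paper's argument overcomes it.

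Moreover, the gap cannot be filled, because the lemma as stated is false. Take $\Theta = [0,1]$, $K = \{0\} \cup \{1/n : n \geq 1\}$, and
\begin{align*}
\mathcal{F} = \left\{ f \in C(\Theta) : \; |f(1/n) - f(0)| \leq 1/n \;\text{ for all } n \geq 1, \;\; |f(0)| \leq 1 \right\}.
\end{align*}
This set is nonempty, closed and convex (each constraint has the form $|L(f)|\le c$ with $L$ a bounded linear functional), and it has empty interior: given $f \in \mathcal{F}$ and $\epsilon>0$, pick $N$ with $1/N < \epsilon/4$ and a continuous bump $\psi_N$ with $\psi_N(1/N)=1$, $0 \le \psi_N \le 1$, vanishing on $K \setminus \{1/N\}$; then $f + (\epsilon/2)\psi_N \notin \mathcal{F}$. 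Yet $f \equiv 0 \in \mathcal{F}$ is \emph{not} a support point: if $\int h\, d\nu \le 0$ for all $h \in \mathcal{F}$, then $\mathcal{F} = -\mathcal{F}$ forces $\int h\, d\nu = 0$ on $\mathcal{F}$; every continuous function vanishing on $K$ lies in $\mathcal{F}$, so $\nu$ is supported on $K$; since $\tfrac{1}{N}\psi_N \in \mathcal{F}$, we get $\nu(\{1/N\}) = 0$ for every $N$; and since the constant function $\mathbf{1}$ lies in $\mathcal{F}$, we get $\nu(\Theta) = 0$, hence also $\nu(\{0\}) = 0$, so $\nu = 0$. This example also defeats your intended case split: here the direction $\mathbf{1}$ does lie in the tangent cone at $0$ (indeed $s\mathbf{1} \in \mathcal{F}$ for $0 \le s \le 1$), and yet $T_0$ is dense, i.e.\ not proper, contrary to what your complementary case would need. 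Running the same construction on $\Theta = \{0\} \cup \{1/n : n \ge 1\}$ even makes $\mathcal{F}$ compact, so adding compactness does not rescue the statement either. Consequently, any correct substitute for this lemma (and for Theorem \ref{thrm:general_case}, which rests on it) must exploit properties of the specific set $\cP$ beyond ``closed, convex, with empty interior.''
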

\begin{proof}
Let $f \in \mathcal{F}$ be arbitrary. 
Since $\mathcal{F}$ is convex and closed and has an empty interior, 
by Bishop–Phelps Theorem \cite[Theorem 7.43]{aliprantis2006infinite}, the set of {support points} of $\mathcal{F}\subset C(\Theta)$ is \emph{dense} in $\mathcal{F}$. Thus, there exists a sequence $\{f_n: n \geq 1\} \subset \mathcal{F}$ such that $f_n$ is a support point of $\mathcal{F}$ for $n \geq 1$, and $\|f_n - f\|_{\infty} \to 0$ as $n \to \infty$.

By definition, for each $n \geq 1$, there exists a non-zero finite signed measure $\nu_n \in \mathcal{M}(\Theta)$ such that
\begin{equation}
    \label{aux:support_n}
\int_{\Theta} f_n(\theta) \nu_n(d\theta) \geq \int_{\Theta}  h(\theta) \nu_n(d\theta), \text{ for } h \in \mathcal{F}.
\end{equation}
Without loss of generality, we may assume $|\nu_n| = 1$. Since $\Theta$ is compact, by applying the Prohorov's Theorem \cite[Theorem 13.29]{klenke2013probability} to the positive and negative parts of $\nu_n, n\geq 1$, there exists $\nu_{\infty} \in \mathcal{M}(\Theta)$ and a subsequence $\{\nu_{n_k}: k \geq 1\}$ of $\{\nu_n: n \geq 1\}$ such that $\nu_{n_k}$ converges to $\nu_{\infty}$ weakly, that is, for any $h \in C(\Theta)$, we have for each $h \in C(\Theta)$,
$$
\lim_{k \to \infty} \int_{\Theta} h(\theta) \nu_{n_k}(d\theta) 
= \int_{\Theta} h(\theta) \nu_{\infty}(d\theta).
$$
Further, by Portmanteau theorem \cite[Theorem 13.16]{klenke2013probability}, we have $|\nu_{\infty}| = 1$, that is, $\nu_{\infty}$ is non-zero.

Since $\|f_n - f\|_{\infty} \to 0$, $|\nu_{n_k}| = 1$ for $k\geq 1$,  and $\nu_{n_k}$ converges to $\nu_{\infty}$ weakly, we have
\begin{align*}
 &   \lim_{k \to \infty} \int_{\Theta} f_{n_k}(\theta) \nu_{n_k}(d\theta)  \\
=&  \lim_{k \to \infty} \int_{\Theta} (f_{n_k}(\theta)  - f(\theta)) \nu_{n_k}(d\theta)   
  +   \int_{\Theta}   f(\theta) \nu_{n_k}(d\theta) \\
= &\int_{\Theta}   f(\theta) \nu_{\infty}(d\theta). 
\end{align*}
Further,  for any $h \in \mathcal{F}$, due to \eqref{aux:support_n},
\begin{align*}
\int_{\Theta}  h(\theta) \nu_{\infty}(d\theta) 
=&\lim_{k \to \infty} \int_{\Theta}  h(\theta) \nu_{n_k}(d\theta) \\
\leq &\lim_{k \to \infty} \int_{\Theta}  f_{n_k}(\theta) \nu_{n_k}(d\theta).
\end{align*}
Thus, for any $h \in \mathcal{F}$, we have $
\int_{\Theta}   f(\theta) \nu_{\infty}(d\theta)  \geq \int_{\Theta}  h(\theta) \nu_{\infty}(d\theta)$, which shows that $f$ is a support point of $\mathcal{F}$. The proof is complete.
\end{proof}

\begin{proof}[Proof of Theorem \ref{thrm:general_case}]
Let $p \in \cP$. By definition, $p$ is the power function of some decision rule $\delta^* \in \Delta$, that is, $p(\cdot) = p(\cdot; \delta^*)$.

By Lemma \ref{lemma:structural} and the assumption, $\cP$ is non-empty, compact and convex, with an empty interior. Then, by Lemma \ref{lemma:C_support}, $p$ is a support point of $\cP$, that is, there exists some non-zero finite signed measure $\nu \in \mathcal{M}(\Theta)$ such that
$$
\int_{\Theta} p(\theta) \nu(d\theta) \geq \int_{\Theta}  h(\theta) \nu(d\theta),\;\; \text{ for } h \in \mathcal{P}.
$$

Since $\cP$ is the set of all power functions, it implies that $\delta^*$ is an optimal solution to the following problem:
$$
\max_{\delta \in \Delta}\; \int_{\Theta} p(\theta;\delta) \nu(d\theta).
$$

Finally, by Lemma \ref{lemma:Bayes_rule} and the remark following it, $\delta^*$ must be of the form in \eqref{def:optimal_rule}. The proof is complete.
\end{proof}

\section{Proof of Theorem  \ref{theorem:simple_null_structrual}}\label{app:proof_two_sided_theorem}
 
\begin{proof}[Proof of Theorem \ref{theorem:simple_null_structrual}]
By Lemma \ref{lemma:structural}, $\cP$ is compact, which implies that $\{p \in \cP: p(\theta_0) \leq \alpha\}$ is also compact.  Due to Assumption \ref{assumption_g_differentiable}, the function $p \in \cP \mapsto \int_{\Theta_1} g(p(\theta)) \Lambda_1(d\theta) \in \bR$ is continuous. Thus, there exists $p^* \in \mathcal{P}$ that solves the following  problem:
\begin{align*}
\max_{p \in \mathcal{P}} \int_{\Theta_1} g(p(\theta)) \Lambda_1(d\theta), \quad
\text{subject to } p(\theta_0) \leq \alpha.
\end{align*}
Since $\bar{\delta}_0$ is not an optimal solution to problem \eqref{def:two_sided_prob}, we have  $\Lambda_1(\{\theta \in \Theta_1: p^*(\theta) > 0\}) > 0$. Further, since $p^*$ is feasible, we have  
$p^*(\theta_0) \leq \alpha$. By Lemma \ref{lemma:structural}, 
\begin{equation}\label{aux:bounded_away_01}
    0 < \inf_{\theta \in \Theta} p^*(\theta) \leq \sup_{\theta \in \Theta} p^*(\theta) < 1.
\end{equation}
Since $p^* \in \cP$, there exists $\delta^* \in \Delta$ such that $p^*(\cdot) = p(\cdot;\delta^*)$.

Fix an arbitrary rule $\tilde{\delta} \in \Delta$ such that its associated power function $\tilde{p}$ satisfies $\tilde{p}(\theta_0) \leq \alpha$, that is, $\tilde{\delta}$ is a feasible solution for the  problem in \eqref{def:two_sided_prob}. Since $\cP$ is convex by Lemma \ref{lemma:structural}, $(1-\epsilon) p^* + \epsilon \tilde{p} \in \cP$ and $(1-\epsilon) p^*(\theta_0) + \epsilon \tilde{p}(\theta_0)\leq \alpha$ for $\epsilon \in [0,1]$. Define the following function: for $\epsilon \in [0,1]$,
$$
J(\epsilon) = \int_{\Theta_1} g\left((1-\epsilon) p^*(\theta) + \epsilon \tilde{p}(\theta)\right) \Lambda_1(d\theta).
$$
Since $g(\cdot)$ is continuously differentiable on $(0,1)$ (see Assumption \ref{assumption_g_differentiable}), due to \eqref{aux:bounded_away_01} and by the dominated convergence theorem, $J'(\epsilon)$ is equal to the following
$$
\int_{\Theta_1} g'\left((1-\epsilon) p^*(\theta) + \epsilon \tilde{p}(\theta)\right)(\tilde{p}(\theta) - p^*(\theta)) \Lambda_1(d\theta).
$$

By the optimality of $p^*$, $J(\cdot)$ attains its maximal value at $\epsilon = 0$, which implies $J'(0) \leq 0$, that is,
$$
\int_{\Theta_1} g'\left(p^*(\theta)\right)\tilde{p}(\theta)  \Lambda_1(d\theta)
\leq \int_{\Theta_1} g'\left(p^*(\theta)\right) p^*(\theta)\Lambda_1(d\theta).
$$
Recall the definition of $H(\cdot)$ in \eqref{def:h_function}. By definition and Fubini's theorem,
\begin{align*}
    \int_{\Theta_1} g'\left(p^*(\theta)\right)\tilde{p}(\theta)  \Lambda_1(d\theta)       
&=   \int_{\Theta_1} g'\left(p^*(\theta)\right) \left(\int_{\bY} \tilde{\delta}(y) f_{\theta}(y) \mu(dy)\right) \Lambda_1(d\theta)  \\
&=\int_{\bY} \tilde{\delta}(y)  H(y) \mu(dy).
\end{align*}
Similarly, $$
\int_{\Theta_1} g'\left(p^*(\theta)\right){p}^*(\theta)  \Lambda_1(d\theta) =   \int_{\bY} {\delta^*}(y) H(y) \mu(dy).
$$

Since $\tilde{\delta} \in \Delta$ is arbitrary, we must have that $\delta^*$ solves the following optimization problem:
\begin{align*}
    \begin{split}
&\max_{\delta \in \Delta}   \int_{\bY} {\delta}(y) H(y) \mu(dy), \\
&\text{subject to } \int_{\bY} {\delta}(y) {f}_{\theta_0}(y) \mu(dy) \leq \alpha.
\end{split}
\end{align*}
Define $c^*:= \int_{\bY} \delta^*(y) f_{\theta_0}(y) \mu(dy)$.  Then, 
$\delta^*$ also solves the following optimization problem:
\begin{align*}
    \begin{split}
&\max_{\delta \in \Delta}   \int_{\bY} {\delta}(y) H(y) \mu(dy), \\
&\text{subject to } \int_{\bY} {\delta}(y) {f}_{\theta_0}(y) \mu(dy) = c^*.
\end{split}
\end{align*}

Since $\delta^*$ is feasible, and due to \eqref{aux:bounded_away_01}, $c^* \in (0,\alpha] \subset (0,1)$. Further, since  $g'$ is continuous on $(0,1)$ (see Assumption \ref{assumption_g_differentiable}),  due to  \eqref{aux:bounded_away_01}, $H(\cdot)$ is $\mu$-integrable. Thus, by the generalized NP lemma \cite[Theorem 3.6.1 (iv)]{romano2005testing}, for $\mu$-a.e.~$y$, $\delta^*$ must take the following form:
\begin{align*}
    \delta^*(y) = \begin{cases}
      1, \;&\text{ if }  \;H(y)  > \kappa f_{\theta_0}(y) \\
      \gamma(y), \;&\text{ if }  \;H(y)  = \kappa f_{\theta_0}(y)\\
      0, \;&\text{ if }  \;H(y)  > \kappa f_{\theta_0}(y)
  \end{cases},
\end{align*}
for  some  $\kappa \in \bR$ and a measurable function $\gamma:\bY \to [0,1]$. The proof is complete for i) and ii).

Finally, we focus on iii). Define $\bar{\delta}_{\alpha}(y) = \alpha$ for $y \in \bY$; that is, the rule $\bar{\delta}_{\alpha} \in \Delta$ rejects $\mathcal{H}_0$ with probability $\alpha$ regardless of $y$. Clearly, $\bar{\delta}_{\alpha}$ is a feasible solution to problem \eqref{def:two_sided_prob}, and since $g(\cdot)$ is strictly increasing (Assumption \ref{assumption_g_increasing}),  the objective value for $\bar{\delta}_{\alpha}$ is strictly larger than that for $\bar{\delta}_{0}$. Thus, $\bar{\delta}_{0}$ is not an optimal solution to problem \eqref{def:two_sided_prob}. 

Let $\delta^*$ be an optimal solution to problem \eqref{def:two_sided_prob}, whose existence is guaranteed by i). Assume the contrary that $c^* := p(\theta_0; \delta^*) < \alpha$. Define a new rule as follows:
$$
\tilde{\delta}^*(y) = \delta^*(y) + \frac{\alpha - c^*}{1-c^*}(1-\delta^*(y)), \;\text{ for } y \in \bY.
$$
Clearly, $\tilde{\delta}^* \in \Delta$ is feasible for problem \eqref{def:two_sided_prob}, and since $g(\cdot)$ is strictly increasing, $\tilde{\delta}^*$ has a strictly larger objective value than $\delta^*$, which is a contradiction. The proof is complete.
\end{proof}
 
\section{Proofs of Corollary  \ref{cor:convexity} and  \ref{cor:exponential_family}}
\label{app:proof_corollaries}

\begin{proof}[Proof of Corollary \ref{cor:convexity}]
Let $\delta^*$ be an optimal rule in part i) of Theorem \ref{theorem:simple_null_structrual} and $p^*$ be the associated power function, that is, $p^*(\theta) = p(\theta;\delta^*)$ for $\theta \in \Theta$. Recall the form of $\delta^*$ in \eqref{def: optimal_rule_simple_null} and the definition of $H(\cdot)$ in \eqref{def:h_function}. By iii) of Theorem \ref{theorem:simple_null_structrual}, $p(\theta_0;\delta^*) = \alpha$.

For each fixed $\theta \in \Theta$, the following function is \emph{strictly} convex:
$$
T(y) \in \mathbbm{R} \mapsto  \phi_{\theta}(T(y))\in [0,\infty).
$$
Since $g'(t) > 0$ for $t \in (0,1)$  (Assumption \ref{assumption_g_increasing}) and $\Lambda_1$ is a probability measure, the following function
$$
T(y) \in \mathbbm{R} \mapsto \int_{\Theta_1}  \phi_{\theta}(T(y)) \nu(d\theta) \in [0,\infty),
$$
is also strictly convex, where $\nu$ is a \textit{measure} on $\Theta_1$ such that $d\nu/d\Lambda_1(\theta) = g'(p^*(\theta))$ for $\theta \in \Theta_1$. As a result, there exists a strictly convex function $\Psi:\bR \to \bR$ such that
$$
H(y)/f_{\theta_0}(y) = \Psi(T(y)),\;\text{ for } y \in \bY.
$$
Due to convexity, the following two sets are convex:
$\{T(y) \in \bR: \Psi(T(y)) < \kappa\}$, and
$\{T(y) \in \bR: \Psi(T(y)) \leq \kappa\}$,  which implies that
they are intervals in $\bR$. Further, since $\Psi(\cdot)$ is strictly convex, the set 
$\{T(y) \in \bR: \Psi(T(y)) = \kappa\}$ contains at most two points.
As a result,  there exist $-\infty \leq \ell < u \leq \infty$ such that
\begin{align*}
    (\ell,u) \subset &\{T(y) \in \bR: \Psi(T(y)) < \kappa\} \\
    \subset &\{T(y) \in \bR: \Psi(T(y)) \leq \kappa\} \subset [\ell,u].
\end{align*}
Since $\gamma(\cdot)$ is allowed to assume values in  $\{0,1\}$, the first claim follows immediately from Theorem \ref{theorem:simple_null_structrual}. The second claim is because $\mu(\{y \in \bY: T(y) \in \{\ell,u\}) = 0$ due to the assumption.
\end{proof}

\begin{proof}[Proof of Corollary \ref{cor:exponential_family}]
For a proof that Assumption \ref{assumption:continuity} holds, see Remark \ref{remark:1_2}. 
    Note that for $y \in \bY$,
    $$
    \frac{f_{\theta}(y)}{f_{\theta_0}(y)} = \phi_{\theta}(T(y)), \text{ for } y \in \mathbbm{Y},
    $$
    where $\phi_{\theta}(z):= \frac{c(\theta)}{c(\theta_0)} e^{(\theta-\theta_0)z}$ for $z \in \bR$. Thus for each $\theta \in \Theta_1$, $\phi_{\theta}$ is a strictly convex function on $\bR$. Thus condition \eqref{assumption:convex_suff} holds.
\end{proof}

\section{Proofs of Theorem \ref{thrm:composite_null_avg} and \ref{theorem:composiste_null_exponential} }\label{app:proof_composite_null}

\begin{proof}[Proof of Theorem \ref{thrm:composite_null_avg}] 
Recall the definition of $\tilde{f}_{\Theta_0}$ proceeding Theorem \ref{thrm:composite_null_avg}. By Fubini's theorem,
\begin{align*}
    \int_{\Theta_0} p(\theta;\delta) \Lambda_0(d\theta)
    =&\int_{\Theta_0} \left(\int_{\bY} \delta(y) f_{\theta}(y) \mu(dy) \right) \Lambda_0(d\theta)\\
=& \int_{\bY} \delta(y) \left(\int_{\Theta_0} f_{\theta}(y) \Lambda_0(d\theta)  \right)\mu(dy) \\
= &\int_{\bY} \delta(y) \tilde{f}_{\Theta_0}(y) \mu(dy).
\end{align*}
Thus, the optimization problem  in \eqref{def:two_sided_prob_composite} is equivalent to the following:
\begin{align*}
    \begin{split}
    &\max_{\delta \in \Delta} \int_{\Theta_1} g(p(\theta;\delta)) \Lambda_1(d\theta), \\ 
&\text{subject to } \int_{\bY} \delta(y) \tilde{f}_{\Theta_0}(y) \mu(dy) \leq \alpha.
\end{split}
\end{align*}
The proof then follows by applying nearly identical arguments as in the proof of Theorem \ref{theorem:simple_null_structrual}, replacing
$f_{\theta_0}$ with $\tilde{f}_{\Theta_0}$.
\end{proof}

Before proving Theorem \ref{theorem:composiste_null_exponential}, we start with supporting lemmas.

\begin{lemma}\label{lemma:compare_variance}
Let $a < b$ be two real numbers. Let $U$ be a random variable taking values in $[a,b]$, and $V$ be a random variable taking values in $(-\infty, a] \cup [b,\infty)$ such that $\Exp[V^2] < \infty$ and $\bP(V < a) + \bP(V > b) > 0$. If $\Exp[U] = \Exp[V] \in (a,b)$, then $\Var(U) < \Var(V)$.
\end{lemma}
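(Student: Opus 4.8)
The plan is to reduce the variance comparison to a second-moment comparison and then exploit a single quadratic identity. Since $\Exp[U] = \Exp[V] = m$ by hypothesis, we have $\Var(U) = \Exp[U^2] - m^2$ and $\Var(V) = \Exp[V^2] - m^2$, so the claim $\Var(U) < \Var(V)$ is equivalent to $\Exp[U^2] < \Exp[V^2]$. The whole proof will therefore be a comparison of second moments, and the finiteness assumption $\Exp[V^2] < \infty$ guarantees every expectation below is finite and legitimate (note $\Exp[U^2] \leq \max\{a^2,b^2\} < \infty$ automatically since $U$ is bounded).

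The core device is the quadratic $q(x) := (x-a)(x-b) = x^2 - (a+b)x + ab$. Its sign is exactly what the supports of $U$ and $V$ dictate: $q(x) \leq 0$ precisely for $x \in [a,b]$ and $q(x) \geq 0$ precisely for $x \in (-\infty,a] \cup [b,\infty)$, with equality in either case only at the endpoints $x \in \{a,b\}$. First I would apply this to $U$: since $U \in [a,b]$ almost surely, $q(U) \leq 0$ almost surely, and taking expectations gives $\Exp[U^2] \leq (a+b)m - ab$. Next I would apply it to $V$: since $V$ takes values in $(-\infty,a] \cup [b,\infty)$ almost surely, $q(V) \geq 0$ almost surely, so $\Exp[V^2] \geq (a+b)m - ab$. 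Chaining these two bounds through the common quantity $(a+b)m - ab$ yields $\Exp[U^2] \leq \Exp[V^2]$.

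It remains to upgrade this to a strict inequality, and this is where the assumption $\bP(V < a) + \bP(V > b) > 0$ enters. Equality $\Exp[V^2] = (a+b)m - ab$ would require $q(V) = 0$ almost surely, i.e.\ $V \in \{a,b\}$ almost surely; but on the event $\{V < a\} \cup \{V > b\}$, which has positive probability, one has $q(V) > 0$ strictly. Hence $\Exp[q(V)] > 0$, giving $\Exp[V^2] > (a+b)m - ab \geq \Exp[U^2]$, which is the desired $\Exp[U^2] < \Exp[V^2]$ and therefore $\Var(U) < \Var(V)$.

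I do not anticipate a genuine obstacle here; the argument is short and the only points requiring care are bookkeeping ones: confirming that the strictly positive part of $q$ coincides with the escape set $\{V<a\}\cup\{V>b\}$ (the endpoints $a,b$ contribute zero to $\Exp[q(V)]$ and need not be excluded), and observing that the hypothesis $m \in (a,b)$, while it makes the configuration consistent, is not actually invoked in the inequality itself. The single genuine idea is the choice of the quadratic $q$, whose opposite sign behavior inside and outside $[a,b]$ does all the work.
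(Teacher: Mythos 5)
Your proof is correct, and it takes a genuinely different route from the paper's. The paper first normalizes to $a=0$, $b=1$, bounds $\Exp[U^2] \leq \Exp[U] = t$ via $U^2 \leq U$ on $[0,1]$, then splits $V$ by conditioning on the two tail events $\{V \leq 0\}$ and $\{V \geq 1\}$, applies Jensen's inequality to each conditional second moment, and chains $q_1 t_1^2 + q_0 t_0^2 > q_1 t_1 \geq t$ using $q_0 t_0 \leq 0$. You instead compare both second moments to the single benchmark $(a+b)m - ab$ through the sign of the quadratic $q(x) = (x-a)(x-b)$: $\Exp[q(U)] \leq 0$ from the support of $U$, $\Exp[q(V)] > 0$ from the support of $V$ together with the positive-probability escape set. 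Your version needs no normalization, no conditional expectations, and no Jensen, and it also exposes that the hypothesis $\Exp[U]=\Exp[V] \in (a,b)$ is stronger than necessary — only the equality of means is used, whereas the paper's argument invokes $\Exp[V] \in (0,1)$ to guarantee $q_0, q_1 > 0$ so that the conditional expectations $t_0, t_1$ are well defined. The one point worth stating explicitly in a final write-up is the standard fact that a nonnegative random variable that is strictly positive on an event of positive probability has strictly positive expectation; that is what turns $q(V) > 0$ on $\{V<a\}\cup\{V>b\}$ into $\Exp[q(V)] > 0$.
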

\begin{proof}
Without loss of generality, assume $a = 0$ and $b=1$. Further, it suffices to show that $\Exp[U^2] < \Exp[V^2]$. 

Denote by $t = \Exp[U] \in (0,1)$ and since $U^2 \leq U$, we have
$\Exp[U^2] \leq \Exp[U] = t$.

Further, denote by $q_{1} = \bP\left(V \geq 1 \right)$ and 
 $q_{0} = \bP\left(V \leq 0\right)$. Since $\Exp[V] \in (0,1)$, we have
 $q_{0},q_{1} > 0$. Define $t_{1} = \Exp\left(V \vert V \geq 1 \right)$ and 
 $t_{0} = \Exp\left(V \vert V \leq 0 \right)$. By definition, $t_{1} \geq 1$, $t_0 \leq 0$, and $q_0 t_0 + q_1 t_1 = t$. Further, since $\bP(V < 0) + \bP(V > 1) > 0$, at least one of the following holds:
 $t_1 > 1$ or $t_0 < 0$.
  
 By Jensen's inequality,
 \begin{align*}
\Exp[V^2] &= q_1 \Exp[V^2 \vert V\geq 1] + q_0 \Exp[V^2|V \leq 0] \\
&\geq q_1 t_1^2 + q_0 t_0^2 > q_1 t_1 \geq t.
 \end{align*} 
 The proof is complete.
\end{proof}

\begin{lemma}\label{lemma:roots}
Let $f: \bR \to \bR$ be an analytic function, which is not identically zero. Assume that for any $x \in \bR$, if $f(x) = 0$, we have $f'(x) > 0$. Then $f$ has at most one root.
\end{lemma}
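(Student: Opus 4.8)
The plan is to argue by contradiction, using the observation that the sign hypothesis forces $f$ to cross zero from below to above at every root. The key structural fact I would establish first is that every root of $f$ is \emph{isolated}: if $f(x_0)=0$ then $f'(x_0)>0$, so $f$ is strictly increasing on a neighborhood of $x_0$, and hence $x_0$ is the only root in that neighborhood. Notably, this isolation already follows from the sign condition itself, so analyticity is not strictly needed for this lemma, although it of course also guarantees isolated zeros via the identity theorem.

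Next, suppose toward a contradiction that $f$ has two distinct roots $a<b$. The zero set $Z=\{x\in\bR: f(x)=0\}$ is closed by continuity, so $Z\cap[a,b]$ is compact; being also a set of isolated points, it must be finite. Listing its elements in increasing order and taking the first two, I obtain two \emph{consecutive} roots $x_1<x_2$, meaning that $f$ has no root in the open interval $(x_1,x_2)$.

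I would then extract the contradiction from the local behavior at the two endpoints. At $x_1$ we have $f(x_1)=0$ and $f'(x_1)>0$, so $f(x_1+h)>0$ for all sufficiently small $h>0$; at $x_2$ we have $f(x_2)=0$ and $f'(x_2)>0$, so $f(x_2-h)<0$ for all sufficiently small $h>0$. Since $f$ is continuous and vanishes nowhere on $(x_1,x_2)$, the intermediate value theorem forces $f$ to keep a single sign throughout $(x_1,x_2)$. This is incompatible with $f$ being strictly positive just to the right of $x_1$ and strictly negative just to the left of $x_2$, which yields the desired contradiction. Hence $f$ has at most one root.

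I would expect the only point requiring genuine care to be the passage from ``two roots exist'' to ``two consecutive roots exist'', which rests on the finiteness of $Z\cap[a,b]$ (a compact set all of whose points are isolated is finite). Once that is in hand, the sign-crossing argument is immediate, being nothing more than the first-order Taylor behavior at a simple zero combined with the intermediate value theorem.
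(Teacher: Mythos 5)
Your proof is correct, and its core is the same as the paper's: reduce to two \emph{consecutive} roots $x_1<x_2$, note that $f'(x_1)>0$ forces $f>0$ just to the right of $x_1$ while $f'(x_2)>0$ forces $f<0$ just to the left of $x_2$, and contradict the constant sign that the intermediate value theorem imposes on the root-free interval $(x_1,x_2)$. The one genuine difference is how consecutive roots are obtained: the paper invokes analyticity to assert the existence of a smallest root of $f$ in $(a,\infty)$, whereas you derive isolation of roots directly from the sign condition and then use compactness of $Z\cap[a,b]$ to get finiteness. Your route is slightly more general --- as you note, it proves the lemma for any differentiable $f$, with analyticity playing no role --- which is a nice observation, though immaterial for the paper since the lemma is only applied to differences of cumulant generating functions, which are analytic. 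One small imprecision: for a merely differentiable $f$, the hypothesis $f'(x_0)>0$ does \emph{not} imply that $f$ is strictly increasing on a neighborhood of $x_0$ (that requires continuity of $f'$ near $x_0$); what it does give, directly from the definition of the derivative, is that $f(x)<0$ for $x$ slightly below $x_0$ and $f(x)>0$ for $x$ slightly above, which is all your argument actually uses, so the proof stands as written.
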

\begin{proof}
Without loss of generality, assume $f$ has a root at $a$.    Assume that $f$ has another root on $(a,\infty)$. Since $f$ is a non-zero analytic function, there exists a smallest root on $(a,\infty)$, denoted by $b$. Since $f(a) = 0$, which implies that $f'(a) > 0$, and due to the minimality of $b$, $f(x) > 0$ for $x \in (a,b)$. This contradicts with the fact that $f(b) = 0$, but $f'(b) > 0$.

By a similar argument, there can be no root on $(-\infty,a)$. The proof is complete.
\end{proof}

The following change-of-measure result (also known as exponential tilting) is well-known in the literature and is provided here for completeness.

\begin{lemma}\label{lemma:change_of_measure}
Let $U$ be a bounded random variable with distribution $F$. For each $x \in \bR$, define a new distribution:
$$
\frac{d F_x}{d F} (u) = \frac{e^{x u}}{\int e^{x u} F(du)}, \text{ for } u \in \bR.
$$
Let $U^{(x)}$ be a random variable with distribution $F_x$. Then $U^{(x)}$ has the same support as $U$, and
\begin{align*}
&\Exp[U^{(x)}] = \left(\log\Exp\left[e^{xU}\right]\right)', \\
&\Var(U^{(x)}) = \left(\log\Exp\left[e^{xU}\right]\right)''.
\end{align*}
\end{lemma}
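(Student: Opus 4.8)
The plan is to identify the normalizing constant with the moment generating function $M(x) := \Exp[e^{xU}] = \int e^{xu}\, F(du)$ and to reduce all three claims to elementary differentiation of $\log M$. Since $U$ is bounded, say $|U| \leq B$ almost surely, the integral defining $M(x)$ is finite for every $x \in \bR$, so $F_x$ is a well-defined probability distribution and the statement is meaningful. I would carry out the support claim first, then the mean, then the variance.

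For the support claim, I would observe that the Radon--Nikodym derivative $d F_x / d F(u) = e^{xu}/M(x)$ is strictly positive and finite for every $u$. Hence $F_x$ and $F$ are mutually absolutely continuous, and therefore share the same support. This step is immediate and requires no estimates.

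For the mean and variance, the crux is to differentiate $M$ under the integral sign, establishing that $M$ is twice differentiable with $M'(x) = \int u\, e^{xu}\, F(du)$ and $M''(x) = \int u^2\, e^{xu}\, F(du)$. The justification uses the bound $|u^k e^{xu}| \leq B^k e^{|x| B}$ for $k \in \{0,1,2\}$, valid on the support of $U$; since this is an integrable (indeed constant) dominating function uniform over $x$ in any compact interval, the dominated convergence theorem permits the interchange of differentiation and integration. This is the only genuinely technical point in the argument, and it is routine precisely because $U$ is bounded.

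With these formulas in hand, the remaining steps are algebraic identities. I would compute $\Exp[U^{(x)}] = \int u\, \big(e^{xu}/M(x)\big)\, F(du) = M'(x)/M(x) = (\log M(x))'$, which is the second claim. For the third, I would compute $\Exp[(U^{(x)})^2] = M''(x)/M(x)$, subtract the square of the mean to get $\Var(U^{(x)}) = M''(x)/M(x) - (M'(x)/M(x))^2$, and recognize the right-hand side as $(\log M(x))'' = (M'/M)'$ by the quotient rule. The main obstacle, such as it is, is purely the interchange of differentiation and integration; every other step is an immediate consequence of the definition of $F_x$.
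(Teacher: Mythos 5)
Your proof is correct. Note that the paper itself gives no proof of this lemma at all: it states the result as ``well-known in the literature'' (exponential tilting) and provides it only for completeness, so there is no argument in the paper to compare against. Your write-up is the standard textbook derivation and it is complete: the mutual absolute continuity of $F_x$ and $F$ (from the strictly positive, finite density $e^{xu}/M(x)$) gives the support claim; boundedness of $U$ gives the uniform dominating constant $B^k e^{|x|B}$ on compact $x$-intervals, justifying $M'(x)=\int u\,e^{xu}F(du)$ and $M''(x)=\int u^2 e^{xu}F(du)$; and the identities $\Exp[U^{(x)}]=M'/M=(\log M)'$ and $\Var(U^{(x)})=M''/M-(M'/M)^2=(\log M)''$ follow by direct computation. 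This fills the gap the authors chose to leave to the reader.
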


Now we prove Theorem \ref{theorem:composiste_null_exponential}.
\begin{proof}[Proof of Theorem \ref{theorem:composiste_null_exponential}]
Recall $H(\cdot)$ in \eqref{def:h_function} and $\tilde{f}_{\Theta_0}(\cdot)$ proceeding Theorem \ref{thrm:composite_null_avg}. Define 
$$
\widetilde{H}(y) := \frac{H(y)}{\tilde{f}_{\Theta_0}(y)}, \; \text{ for } y \in \bY.
$$
By the definition of $f_{\theta}$ in \eqref{def:exponential_family}, we have
\begin{align*}
    \widetilde{H}(y) = \frac{\int_{\Theta_1} g'(p(\theta;\delta^*)) c(\theta) e^{\theta T(y)}  \Lambda_1(d \theta) }{\int_{\Theta_0} c(\theta) e^{\theta T(y)}\Lambda_0(d \theta)}.
\end{align*}
By Theorem \ref{thrm:composite_null_avg}, for any optimal rule $\delta^* \in \Delta$, it rejects $\mathcal{H}_0$ with probability $1$, $\gamma(y)$, $0$ if $\widetilde{H}(y) >, =, < \kappa$, respectively, for some  $\kappa \geq 0$ and $\gamma:\bY \to [0,1]$.

Note that $g'(t) > 0$ for $t \in (0,1)$ (Assumption \ref{assumption_g_increasing}). 
Let $U$ and $V$ be two random variables with the distributions $F_U$ and $F_V$, respectively, where
\begin{align*}
&\frac{d F_U}{d \Lambda_1}(\theta) = \frac{1}{K_1} g'(p(\theta;\delta^*) c(\theta),\; \text{ for } \theta \in \Theta_1\\
&\frac{d F_V}{d \Lambda_0}(\theta) = \frac{1}{K_2} c(\theta), \; \text{ for } \theta \in \Theta_0,
\end{align*}
and the normalizing constants are defined as
\begin{align*}
    &K_1 := \int_{\Theta_1} g'(p(\tilde{\theta};\delta^*) c(\tilde{\theta})  \Lambda_1(d \tilde{\theta}), \\
    &K_2 := \int_{\Theta_0}  c(\tilde{\theta})  \Lambda_0(d \tilde{\theta}).
\end{align*}
By definition, we have
\begin{align*}
     \widetilde{H}(y) := \frac{K_1}{K_2} \frac{\Exp[e^{U T(y)}]}{\Exp[e^{V T(y)}]}.
\end{align*}
Thus $\widetilde{H}(y) >, =, < \kappa$ is equivalent to, respectively, the following:
$$
\log(\Exp[e^{U T(y)}]) - \log(\Exp[e^{V T(y)}]) >,=,< \tilde{\kappa},
$$
where $\tilde{\kappa} := \log(\kappa) - \log(K_1/K_2)$. 
Define for $x \in \bR$,
$$
\psi(x) :=\log(\Exp[e^{U x}]) - \log(\Exp[e^{V x}]) - \tilde{\kappa}.
$$
Recall the definition of $U^{(x)}$ in Lemma \ref{lemma:change_of_measure}, and we define $V^{(x)}$ similarly. Then by Lemma \ref{lemma:change_of_measure},
\begin{align*}
&\psi'(x) = \Exp\left[U^{(x)}\right] - \Exp\left[V^{(x)}\right], \\
&\psi''(x) = \Var\left[U^{(x)}\right] - \Var\left[V^{(x)}\right].
\end{align*}
Since for each $x \in \bR$, $U^{(x)}$ is supported on $\Theta_1 = [-K,a) \cup (b,K]$, and $V^{(x)}$ on $\Theta_0 = [a,b]$. By Lemma \ref{lemma:compare_variance}, if $\psi'(x) = 0$ for some $x \in \bR$, then $\psi''(x) > 0$. Then, by Lemma \ref{lemma:roots}, $\psi'$ has at most one root on $\bR$.

If $\psi'$ has no root, then $\psi(T(y))$ is a monotone function of $T(y)$. If $\psi'$ has exactly one root, then $\psi(T(y))$ is either ``first increasing, then decreasing'' or ``first decreasing, then increasing'' with $T(y)$. Since $\Theta_0 = [a,b]$, as $|T(y)| \to \infty$, $\psi(T(y)) \to \infty$, which implies that $\delta^*$ must be of the form given in \eqref{def:optimal_rule_Ty}.
\end{proof}

\section{Proofs of Theorem \ref{theorem:composiste_supremum} and Theorem   \ref{theorem:composiste_exponential_supremum}}\label{app:composite_supremum}
\begin{proof}[Proof of Theorem \ref{theorem:composiste_supremum}]
By arguments similar to those in Theorem \ref{theorem:simple_null_structrual}, optimal decision rules exist for \eqref{def:composite_null_sup}. Let $\delta^* \in \Delta$ be an optimal rule for \eqref{def:composite_null_sup} and $p^* \in \cP$ be the associated power function. 

Again, by arguments similar to those in Theorem \ref{theorem:simple_null_structrual}, we have that 
$\sup_{\theta \in \Theta_0} p(\theta;\delta^*) = \alpha$ and
\eqref{aux:bounded_away_01} holds for $p^*$, and that $\delta^*$ is the optimal solution to the following optimization problem:
\begin{align*}
      \begin{split}
&\max_{\delta \in \Delta}   \int_{\bY} {\delta}(y) H(y) \mu(dy), \\
&\text{subject to } \sup_{\theta \in \Theta_0} p(\theta; \delta) \leq \alpha,
\end{split}  
\end{align*}
where recall that $H(\cdot)$ is defined in \eqref{def:h_function}. Since $g'(t) > 0$ for $0 < t < 1$ (Assumption \ref{assumption_g_increasing}), due to \eqref{aux:bounded_away_01}, we have  $H(y) \geq 0$ for $y \in \bY$ and $0 < \int_{\bY} H(y) \mu(dy) < 1$. 

Since $\Theta_0$ is closed and $\Theta$ is compact, by  Theorem 3.8.1 of \cite{romano2005testing} (see also the remark on Page 86 of \cite{romano2005testing} and \cite{lehmann1952existence}), there exists a probability measure $\widetilde{\Lambda}_0$ on $\Theta_0$ such that $\delta^*$ is the optimal solution to the following optimization problem:
\begin{align*}
      \begin{split}
&\max_{\delta \in \Delta}   \int_{\bY} {\delta}(y) H(y) \mu(dy), \\
&\text{subject to } \int_{\Theta_0}  \delta(y) \widetilde{f}_{\widetilde{\Lambda}_0}(y) \mu(dy) \leq \alpha,
\end{split}  
\end{align*}
where $ \widetilde{f}_{\widetilde{\Lambda}_0}$ is defined in Theorem \ref{theorem:composiste_supremum}. Then the proof is complete by the NP lemma \cite[Theorem 3.6.1]{romano2005testing},
\end{proof}

Next, we prove Theorem   \ref{theorem:composiste_exponential_supremum}.

\begin{proof}[Proof of Theorem \ref{theorem:composiste_exponential_supremum}]
Let $\delta^*$ be the optimal rule in Theorem \ref{theorem:composiste_supremum}. 
Recall $H(\cdot)$ in \eqref{def:h_function}, $\widetilde{f}_{\widetilde{\Lambda}_0}(\cdot)$ in \ref{theorem:composiste_supremum} and  $f_{\theta}$ in \eqref{def:exponential_family}. Define for $y \in \bY$,
$$
\bar{H}(y):= \frac{H(y)}{\widetilde{f}_{\widetilde{\Lambda}_0}(y)} =\frac{\int_{\Theta_1} g'(p(\theta;\delta^*)) c(\theta) e^{\theta T(y)}  \Lambda_1(d \theta) }{\int_{\Theta_0} c(\theta) e^{\theta T(y)}\widetilde{\Lambda}_0(d \theta)}.
$$
Thus for $\mu$-a.e.~$y$, $\delta^*(y) = 1, \gamma(y), 0$ if 
$\bar{H}(y) >, =, < 1$ respectively. In the proof of Theorem \ref{theorem:composiste_null_exponential}, it is shown that $\delta^*$ must be of the form given in \eqref{def:optimal_rule_Ty}.

It remains to show that $\max\{p(a;\delta^*), p(b;\delta^*)\} = \alpha$. Assume the contrary that 
$\max\{p(a;\delta^*), p(b;\delta^*)\} = \alpha' < \alpha$. 

Denote by $\bar{\theta}$ a maximizer of the function $p(\cdot;\delta^*)$ on $[a,b]$. Since $\alpha' < \alpha = \sup_{\theta \in [a,b]} p(\theta;\delta^*)$, we must have $\bar{\theta} \in (a,b)$ and $p(\bar{\theta};\delta^*) > \alpha$.  Further,
\begin{align}
    \label{aux:theta_bar}
    \frac{d}{d\theta} \log p(\bar{\theta}; \delta^*) = 0, \;\;
    \frac{d^2}{d^2\theta} \log p(\bar{\theta}; \delta^*) \leq 0.
\end{align}

For each $\theta \in [-K,K]$, recall the definition of $f_{\theta}$   in \eqref{def:exponential_family}, and define two probabilities measures $\mu_{\theta,+}$ and $ \mu_{\theta,-}$ on $\bY$ as follows: for each $y \in \bY$, 
\begin{align*}
    &d \mu_{\theta,+}/d\mu(y) = (K_{\theta,+})^{-1}\delta^*(y) h(y) e^{{\theta} T(y)}, \\
    &d \mu_{\theta,-}/d\mu(y) = (K_{\theta,-})^{-1}\bar{\delta}^*(y) h(y) e^{{\theta} T(y)}, 
\end{align*}
where  $\bar{\delta}^*(y) := 1 - \delta^*(y)$ and
\begin{align*}
K_{\theta,+} := \int_{\bY} \delta^*(\tilde{y}) h(\tilde{y}) e^{{\theta} T(\tilde{y})} \mu(d\tilde{y}), \\
K_{\theta,-} := \int_{\bY} \bar{\delta}^*(\tilde{y}) h(\tilde{y}) e^{{\theta} T(\tilde{y})} \mu(d\tilde{y}).
\end{align*}
By definition, $p(\theta;\delta^*) = K_{\theta,+}/(K_{\theta,+}+K_{\theta,-})$, and 
for $\iota \in \{+,-\}$,
\begin{align*}
    &\frac{d}{d\theta} K_{\theta, \iota} =K_{\theta,\iota} \int_{\bY} T(y) \mu_{\theta,\iota}(dy), \\
    &\frac{d^2}{d^2\theta} K_{\theta, \iota} =K_{\theta,\iota} \int_{\bY} (T(y))^2 \mu_{\theta,\iota}(dy). 
\end{align*}
By elementary calculation of the first two derivatives of $\log p({\theta};\delta^*)$ at $\bar{\theta}$, and due to \eqref{aux:theta_bar},  we have
\begin{align*}
    &\int_{\bY} T(y)\mu_{\bar{\theta},+}(dy) = \int_{\bY} T(y)\mu_{\bar{\theta},-}(dy),\\
    &\int_{\bY} (T(y))^2 \mu_{\bar{\theta},+}(dy) \leq  \int_{\bY} (T(y))^2 \mu_{\bar{\theta},-}(dy).
\end{align*}
Let $Y_{+}$ and $Y_{-}$ be two random variables with distribution $\mu_{\bar{\theta},+}$ and $\mu_{\bar{\theta},+}$ respectively. Then, the above-displayed equation implies that 
$$\Exp[T(Y_{+})] = \Exp[T(Y_{-})],\;\;\text{Var}(T(Y_{+})) \leq \text{Var}(T(Y_{-})).
$$
 However,  by the definition of $\delta^*$, $T(Y_{+})$ is supported on $ (-\infty,\ell] \cup [u, \infty)$, and $T(Y_{-})$ on $ [\ell,u]$. Further, since $T(Y_{+})$ and $T(Y_{-})$ take at least two values, we have that
 $\bP(T(Y_{+}) < \ell) + \bP(T(Y_{+}) > u) > 0$,
 and  $\bP(\ell< T(Y_{-}) < u) > 0$. Then by Lemma \ref{lemma:compare_variance}, if $\Exp[T(Y_{+})] = \Exp[T(Y_{-})] \in (\ell,u)$, we must have $\text{Var}(T(Y_{+})) > \text{Var}(T(Y_{-}))$, which is a contradiction. The proof is complete.
\end{proof}

\bibliographystyle{IEEEtran}
\bibliography{reference}

\end{document}